\newcommand\nc{\newcommand}
\let\Lsh\relax
\nc{\gd}{{\delta}}
\nc{\gs}{{\sigma}}
\nc{\gth}{{\theta}}
\nc{\mi}{{\,\mathrm{i}\,{}}}
\nc{\dd}{\mathrm{d}}
\nc{\lra}{{\longrightarrow}}
\nc{\tD}{\widetilde{D}}
\DeclareMathOperator{\RR}{\mathfrak{R}}
\DeclareMathOperator{\arcsinh}{arcsinh}
\DeclareMathOperator{\Li}{Li}
\DeclareMathOperator{\Ls}{Ls}
\DeclareMathOperator{\Lsc}{Lsc}
\DeclareMathOperator{\Lsh}{Lsh}
\DeclareMathOperator{\Lshch}{Lshch}
\DeclareMathOperator{\Cl}{Cl}
\let\Re\relax
\let\Im\relax
\DeclareMathOperator{\Re}{Re}
\DeclareMathOperator{\Im}{Im}
\nc{\CMZV}{\mathsf{CMZV}}
\nc{\QZV}{\mathsf{QZV}}
\nc{\R}{{\mathbb{R}}}
\nc{\Q}{{\mathbb{Q}}}
\nc{\CP}{{\mathbb{CP}}}
\numberwithin{equation}{section}
\theoremstyle{plain}
\newtheorem{thm}{Theorem}[section]
\newtheorem{lem}[thm]{Lemma}
\newtheorem{cor}[thm]{Corollary}
\newtheorem{con}[thm]{Conjecture}
\theoremstyle{definition}
\newtheorem{defn}{Definition}[section]
\newtheorem{re}[thm]{Remark}
\let\overlineO\overline
\renewcommand{\overline}[1]{\overlineO{\mathclap{\phantom{I}}#1}}
\let\bar\overline
\begin{document}

\title{\bf On two conjectures of Sun \\ concerning Ap\'ery-like series}

\author{
{Steven Charlton${}^{a,}$\thanks{Email: steven.charlton@uni-hamburg.de, ORCID 0000-0002-2815-1885.}~,
	 Herbert Gangl${}^{b,}$\thanks{Email: herbert.gangl@durham.ac.uk, ORCID 0000-0001-7785-263X.}~,
	  Li Lai${}^{c,}$\thanks{Email: lilaimath@gmail.com, ORCID 0000-0001-8699-8753.}~,
	   Ce Xu${}^{d,}$\thanks{Email: cexu2020@ahnu.edu.cn, ORCID 0000-0002-0059-7420.}~
	    and Jianqiang Zhao${}^{e,}$\thanks{Email: zhaoj@ihes.fr, ORCID 0000-0003-1407-4230.}}\\[1mm]
\small $a$. Fachbereich Mathematik (AZ), Universit\"at Hamburg,\\ \small Bundesstra{\ss}e 55, 20146 Hamburg, Germany\\
\small $b$. Department of Mathematical Sciences, Durham University,\\ \small Durham DH1 3LE, United Kingdom\\
\small $c$. Department of Mathematical Sciences, Tsinghua University,\\ \small  Beijing 100084, P.R. China\\
\small $d$. School of Mathematics and Statistics, Anhui Normal University,\\ \small Wuhu 241002, P.R. China\\
\small $e$. Department of Mathematics, The Bishop's School, La Jolla,\\ \small CA 92037, United States of America}

\date{}
\maketitle

\noindent{\bf Abstract.} In this paper, we shall prove two conjectures of Z.-W. Sun concerning Ap\'ery-like series. One of the series is alternating whereas the other one is not. Our main strategy is to convert the series (resp.~the alternating series) to log-sine-cosine (resp.~log-sinh-cosh) integrals. Then we express all these integrals in terms of single-valued Bloch-Wigner-Ramakrishnan-Wojtkowiak-Zagier polylogarithms.
The conjectures then follow from a few highly non-trivial functional equations of the polylogarithms of weight $3$ and $4$.

\noindent{\bf Keywords}: Ap\'ery-like series, log-sine-cosine integrals, colored multiple zeta values, Sun's conjectures.

\noindent{\bf AMS Subject Classifications (2020):} 11M32.

\section{Introduction}

Let $\zeta(s) := \sum_{n=1}^{\infty} n^{-s}$ be the Riemann zeta function for $\Re s >1$. In the 1979's proof \cite{Ape1979} of the irrationality of $\zeta(3)$, R. Ap\'ery made use of the following infinite series involving central binomial coefficients:
\[\sum_{n=1}^{\infty} \frac{(-1)^{n-1}}{n^{3}\binom{2n}{n}}=\frac{2}{5}\zeta(3).\]
Since then, the \textit{Ap\'ery-like} series have attracted much attention.
We refer the reader to \cite{XZ2022} for a survey on recent progress.

The aim of this paper is to prove two conjectures of Z.-W. Sun concerning Ap\'ery-like series. These conjectures were published first in \cite{Sun2015} and included in Sun's book \cite{Sun2021}. Define the classical harmonic numbers
\[H_n:=\sum_{k=1}^{n}\frac{1}{k},\quad\text{for~}n=1,2,3,\ldots.\]
Let
\[\beta(s):=\sum_{n=0}^{\infty}\frac{(-1)^n}{(2n+1)^s},\quad\text{for~}\Re s>0\]
be the Dirichlet beta function.

\begin{con}[{\cite[Conjectures 10.59(i) and 10.60]{Sun2021}}] We have
\begin{align}
&\sum_{n=0}^\infty\frac{\binom{2n}{n}}{(2n+1)^3 16^n}\left(9H_{2n+1}+\frac{32}{2n+1}\right)=40\beta(4)+\frac{5}{12}\pi \zeta(3),\label{conj2}\\
&\sum_{n=0}^\infty\frac{\binom{2n}{n}}{(2n+1)^2(-16)^n}\left(5H_{2n+1}+\frac{12}{2n+1}\right)=14\zeta(3).\label{conj1}
\end{align}
\end{con}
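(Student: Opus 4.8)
The plan is to realize each series as an iterated integral of an elementary generating function and then to recognise the result as a (generalised) log-sine-cosine integral, exactly along the route announced in the abstract. The two basic inputs are $\arcsin(x)=\sum_{n\ge0}\binom{2n}{n}x^{2n+1}/(4^n(2n+1))$, whose repeated integration against $\dd x/x$ supplies the factors $1/(2n+1)^k$, and the deformed binomial expansion
\[
(1-x^2)^{-1/2-\varepsilon} = \sum_{n=0}^\infty \binom{2n}{n}\frac{x^{2n}}{4^n}\Bigl(1 + \varepsilon\,(2H_{2n}-H_n) + O(\varepsilon^2)\Bigr),
\]
whose $\varepsilon$-derivative at $\varepsilon=0$ introduces harmonic weights. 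The target $H_{2n+1}$ is then reached by combining this with the elementary pieces $H_n$ and $1/(2n+1)$ (or, more directly, via $H_{2n+1}=\int_0^1(1-t^{2n+1})/(1-t)\,\dd t$). Splitting the bracket $9H_{2n+1}+32/(2n+1)$ (resp.\ $5H_{2n+1}+12/(2n+1)$) into a harmonic and a non-harmonic part, I would assemble each series as a finite $\Q$-linear combination of such iterated integrals, specialised to the single point $x=1/2$ for \eqref{conj2} and $x=\mathrm{i}/2$ for \eqref{conj1}.

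Next I would perform the substitution $x=\sin\phi$ for the non-alternating series and $x=\mathrm{i}\sinh\psi$ for the alternating one. Since $\sqrt{1-x^2}=\cos\phi$ and $\log(1-x^2)=2\log\cos\phi$ in the first case, and $\cosh\psi,\ 2\log\cosh\psi$ in the second, the iterated integrals become integrals of powers of $\phi$ against powers of $\log(2\cos\phi)$ and $\log(2\sin\phi)$, i.e.\ generalised log-sine-cosine integrals $\Lsc$ (resp.\ log-sinh-cosh integrals $\Lshch$). The upper limit is $\arcsin(1/2)=\pi/6$ in the trigonometric case and $\arcsinh(1/2)=\log\varphi$, with $\varphi=(1+\sqrt5)/2$ the golden ratio, in the hyperbolic case; after the usual half-angle manipulations (which turn $\int_0^{\pi/6}\log\sin\alpha\,\dd\alpha$ into a Clausen value $\Cl_2(\pi/3)$, etc.) these become integrals up to $\pi/3$ and $\log\varphi$ respectively. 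This explains a priori why \eqref{conj2} should be expressible through sixth roots of unity, hence through $\beta(4)$ and $\pi\zeta(3)$, while \eqref{conj1} should involve golden-ratio arguments, hence $\zeta(3)$.

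I would then express every $\Ls$, $\Lsc$ and $\Lshch$ integral that appears in terms of the single-valued Bloch--Wigner--Ramakrishnan--Wojtkowiak--Zagier polylogarithms. This is the natural target because the integrals are manifestly real, and these single-valued functions are precisely the real-analytic, single-valued combinations of the multivalued $\Li_k$ produced by integrating such kernels; the relevant weight is $4$ for \eqref{conj2} and $3$ for \eqref{conj1}. At the end of this step each side of the two conjectures is a $\Q$-linear combination of single-valued polylogarithms evaluated at sixth roots of unity, respectively at arguments lying in $\Q(\varphi)$.

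The hard part will be the final reduction. The raw polylogarithmic expressions contain many terms, and collapsing them to the clean right-hand sides $40\beta(4)+\tfrac{5}{12}\pi\zeta(3)$ and $14\zeta(3)$ requires a handful of genuinely non-trivial functional equations: weight-$4$ relations among the single-valued polylogarithms at sixth roots of unity for \eqref{conj2}, and weight-$3$ relations at golden-ratio arguments (of the type underlying the classical $\Li_3$ ladder for $\varphi$) for \eqref{conj1}. I expect the discovery, verification and application of these functional equations, rather than the bookkeeping of the earlier steps, to be the crux of the proof.
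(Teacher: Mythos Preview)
Your plan is correct and matches the paper's approach essentially step for step: convert the series to log-sine-cosine (resp.\ log-sinh-cosh) integrals at $\pi/6$ (resp.\ $\log\phi$), rewrite these via the single-valued polylogarithms $\tD_4$ (resp.\ $\tD_3$), and finish with nontrivial functional equations. The only minor refinement is that for \eqref{conj2} the arguments that actually arise are $12$th roots of unity and points like $|1-\rho|\,\rho^{1/2}$ rather than pure sixth roots, so the relevant $\tD_4$ relations (Kummer's equation plus distribution) are slightly more exotic than you anticipate.
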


There are two major steps in our proof of this conjecture. First, we will express
the Ap\'ery-type series on the left-hand side of \eqref{conj2} (resp.~\eqref{conj1}) by some log-sine-cosine (resp.~log-sinh-cosh) integrals. Then, we will evaluate these integrals using a single-valued version of the polylogarithms, denoted by $\tD_m(x)$ in Zagier's seminal paper \cite{Zag1990a}.

\section{Log-sine-cosine integrals}

\begin{defn}
Let $j$ and $k$ be two positive integers. For any real number $\gth$, we define the \emph{log-sine integrals} by
\begin{align*}
\Ls_{j}(\gth):=-\int_0^\gth\log^{j-1}\left|2\sin\frac{t}{2}\right| \dd t,
\end{align*}
and more generally, the \emph{log-sine-cosine integrals} by
\begin{align*}
\Lsc_{j,k}(\gth):=-\int_0^\gth\log^{j-1}\left|2\sin\frac{t}{2}\right|
    \log^{k-1}\left|2\cos\frac{t}{2}\right|\dd t.
\end{align*}

Similarly, for any real number $\gth$, we define the \emph{log-sinh integrals} by
\begin{align*}
\Lsh_{j}(\gth):=-\int_0^\gth\log^{j-1}\left|2\sinh\frac{t}{2}\right|\dd t,
\end{align*}
and more generally, the \emph{log-sinh-cosh integrals} by
\begin{align*}
\Lshch_{j,k}(\gth):=-\int_0^\gth\log^{j-1}\left|2\sinh\frac{t}{2}\right|
\log^{k-1}\left|2\cosh\frac{t}{2}\right|\dd t.
\end{align*}
\end{defn}

\bigskip

The log-sine-cosine integrals have been considered by L. Lewin \cite{Lew1958,Lew1981}.
They appear in physical applications as well, see for instance \cite{DK2004}.

\bigskip

The following simple fact is useful. For any positive integers $p$ and $n$, and for any nonnegative real number $z$, we have
\begin{align}\label{eqn-simple-fact}
\frac{1}{(p-1)!}\int_{0}^{z}\frac{\log^{p-1}\left(\frac{z}{w}\right)}{w}\cdot w^n\dd w=\frac{z^n}{n^p}.
\end{align}

\begin{lem} For any nonnegative integer $p$ and real number $z\in [0,1/2]$, we have
\begin{align}\label{eqn-lsc-1}
\sum_{n=0}^\infty\binom{2n}{n}\frac{z^{2n+1}}{(2n+1)^{p+1}}=\frac{\gth}{2}\frac{\log^p(2\sin\gth)}{p!}+\frac{1}{4p!}\sum_{j=1}^{p}(-1)^{j-1}\binom{p}{j}\log^{p-j}(2\sin\gth)\Ls_{j+1}(2\gth),
\end{align}
where $\gth:=\arcsin(2z) \in [0,\pi/2]$.

Similarly, for any nonnegative integer $p$ and real number $z\in [0,1/2]$, we have
\begin{align}\label{eqn-lshch-1}
\!\!\!\!\sum_{n=0}^\infty\binom{2n}{n}\frac{(-1)^{n}z^{2n+1}}{(2n+1)^{p+1}}=\frac{\gth}{2}\frac{\log^p(2\sinh\gth)}{p!}+\frac{1}{4p!}\sum_{j=1}^p(-1)^{j-1}\binom{p}{j}\log^{p-j}(2\sinh\gth)\Lsh_{j+1}(2\gth),
\end{align}
where $\gth:=\arcsinh(2z) \in [0,\log(\sqrt{2}+1)]$.
\end{lem}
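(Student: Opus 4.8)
The plan is to reduce both identities to a single integral representation and then perform a trigonometric (resp.\ hyperbolic) change of variables. I would begin with the classical generating function $\sum_{n=0}^\infty \binom{2n}{n} x^n = (1-4x)^{-1/2}$. Setting $x=w^2$ and multiplying by $w$ gives $\sum_{n=0}^\infty \binom{2n}{n} w^{2n+1} = w(1-4w^2)^{-1/2}$, valid for $|w|<1/2$. To manufacture the denominator $(2n+1)^{p+1}$, I would apply \eqref{eqn-simple-fact} with exponent $2n+1$ and with $p$ replaced by $p+1$, namely
\begin{align*}
\frac{z^{2n+1}}{(2n+1)^{p+1}}=\frac{1}{p!}\int_0^z \frac{\log^p(z/w)}{w}\,w^{2n+1}\,\dd w .
\end{align*}
Summing against $\binom{2n}{n}$ and interchanging sum and integral then collapses the left-hand side of \eqref{eqn-lsc-1} into
\begin{align*}
\sum_{n=0}^\infty\binom{2n}{n}\frac{z^{2n+1}}{(2n+1)^{p+1}}=\frac{1}{p!}\int_0^z \frac{\log^p(z/w)}{\sqrt{1-4w^2}}\,\dd w .
\end{align*}

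The second step is the substitution $2w=\sin\phi$, under which $\dd w/\sqrt{1-4w^2}=\tfrac12\,\dd\phi$ and, crucially, the ratio inside the logarithm loses its factor of two: since $2z=\sin\gth$ and $2w=\sin\phi$, one has $\log(z/w)=\log(2\sin\gth)-\log(2\sin\phi)$. The integral becomes $\tfrac{1}{2\,p!}\int_0^\gth\big(\log(2\sin\gth)-\log(2\sin\phi)\big)^p\,\dd\phi$. Expanding by the binomial theorem and recognising, after the substitution $t=2\phi$, that $\int_0^\gth\log^j(2\sin\phi)\,\dd\phi=-\tfrac12\Ls_{j+1}(2\gth)$, the $j=0$ term produces the leading term $\tfrac{\gth}{2}\log^p(2\sin\gth)/p!$ while the terms $j=1,\dots,p$ assemble exactly into the stated sum, the sign $(-1)^j$ from the expansion combining with the sign of $-\tfrac12\Ls_{j+1}$ to give $(-1)^{j-1}$. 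This proves \eqref{eqn-lsc-1}.

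For \eqref{eqn-lshch-1} the argument is identical after replacing $x=w^2$ by $x=-w^2$, so that $\sum_{n=0}^\infty\binom{2n}{n}(-1)^n w^{2n+1}=w(1+4w^2)^{-1/2}$, and then using the substitution $2w=\sinh\phi$, for which $\dd w/\sqrt{1+4w^2}=\tfrac12\,\dd\phi$ and $\log(z/w)=\log(2\sinh\gth)-\log(2\sinh\phi)$; the same binomial expansion now produces $\Lsh_{j+1}(2\gth)$, and the range $\gth\in[0,\log(\sqrt2+1)]$ matches $z\in[0,1/2]$ since $\arcsinh(1)=\log(1+\sqrt2)$.

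I expect the only genuine technical point to be the justification of the termwise integration. For $z\in[0,1/2)$ (resp.\ in the open hyperbolic range) the power series converges uniformly on $[0,z]$ and the interchange is immediate; at the endpoint $z=1/2$ the generating function itself diverges, so I would either invoke Abel's theorem together with the continuity of both sides as $z\to 1/2^-$, or apply dominated convergence using the integrable singularity $(1-4w^2)^{-1/2}=O\big((1-2w)^{-1/2}\big)$ near $w=1/2$. Everything else is the routine bookkeeping of the binomial expansion and the elementary substitutions.
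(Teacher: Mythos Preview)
Your argument is correct and is essentially identical to the paper's: the paper cites \cite{CPXZ2022} for \eqref{eqn-lsc-1} and proves \eqref{eqn-lshch-1} by exactly the same route you describe---apply \eqref{eqn-simple-fact} to the generating function $\sum\binom{2n}{n}(-1)^n z^{2n+1}=\tfrac12\tanh\gth$, substitute $w=\tfrac12\sinh t$, and expand $(\log(2\sinh\gth)-\log(2\sinh t))^p$ by the binomial theorem. Your only addition is that you spell out the (identical) trigonometric version for \eqref{eqn-lsc-1} rather than citing it, and you note the continuity/Abel argument at $z=1/2$, which the paper leaves implicit.
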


\begin{proof}
The first identity \eqref{eqn-lsc-1} is proved in \cite[Theorem 4]{CPXZ2022}. For \eqref{eqn-lshch-1}, we start with the simple identity
\[\sum_{n=0}^\infty\binom{2n}{n}(-1)^{n}z^{2n+1}=\frac{z}{\sqrt{1+4z^2}}=\frac{\tanh\gth}{2}\qquad(\text{recall~}z=\frac{1}{2}\sinh\gth).\]
By \eqref{eqn-simple-fact}, we have
\begin{align}
\sum_{n=0}^\infty\binom{2n}{n}\frac{(-1)^{n}z^{2n+1}}{(2n+1)^{p+1}}&=\frac{1}{p!}\int_{0}^{z}\frac{\log^{p}\left(\frac{z}{w}\right)}{w}\cdot\sum_{n=0}^\infty\binom{2n}{n}(-1)^{n}w^{2n+1}\dd w\nonumber\\
&=\frac{1}{p!}\int_{0}^{\gth}\frac{\log^{p}\left(\frac{1}{2}\sinh \gth\big/\frac{1}{2}\sinh t\right)}{\frac{1}{2}\sinh t}\cdot\frac{\tanh t}{2}\dd \left(\frac{1}{2}\sinh t\right)\tag{$w=\frac{1}{2}\sinh t$}\\
&=\frac{1}{2p!}\int_{0}^{\gth}\left(\log(2\sinh \gth)-\log(2\sinh t)\right)^p\dd t\nonumber\\
&=\frac{\gth}{2}\frac{\log^p(2\sinh\gth)}{p!}+\frac{1}{2p!}\sum_{j=1}^{p}(-1)^{j}\binom{p}{j}\log^{p-j}(2\sinh\gth)\int_{0}^{\gth}\log^{j}(2\sinh t)\dd t\nonumber\\
&=\frac{\gth}{2}\frac{\log^p(2\sinh\gth)}{p!}+\frac{1}{4p!}\sum_{j=1}^p(-1)^{j-1}\binom{p}{j}\log^{p-j}(2\sinh\gth)\Lsh_{j+1}(2\gth).\nonumber
\end{align}
The proof is now complete.
\end{proof}

\bigskip

\begin{lem} For any positive integer $p$ and real number $z\in[0,1/2]$, we have
\begin{equation}\label{eqn-lsc-2}\begin{aligned}[c]
\sum_{n=1}^\infty \binom{2n}{n}\frac{H_{2n}}{(2n+1)^p}z^{2n+1}=\frac1{(p-1)!}\sum_{j=1}^{p}&(-1)^{j-1}\binom{p-1}{j-1}\log^{p-j}(2\sin\gth)\\[-2ex]
&\times\bigg\{\frac1{2}\Lsc_{j,2}(2\gth)-\sum_{l=1}^{j}\binom{j-1}{l-1}\Lsc_{l,j-l+2}(\gth)\bigg\},
\end{aligned}
\end{equation}
where $\gth=\arcsin(2z)\in[0,\pi/2]$.

Similarly, for any positive integer $p$ and real number $z\in [0,1/2]$, we have
\begin{equation}\label{eqn-lshch-2}
\begin{aligned}[c]
\sum_{n=1}^\infty \binom{2n}{n} \frac{H_{2n}}{(2n+1)^p}(-1)^{n}z^{2n+1}= {} &\frac1{(p-1)!} \sum_{j=1}^p (-1)^{j-1} \binom{p-1}{j-1} \log^{p-j}(2\sinh\gth)\\[-1ex]
&\quad \times\bigg\{\frac1{2}\Lshch_{j,2}(2\gth)-\sum_{l=1}^j \binom{j-1}{l-1}\Lshch_{l,j-l+2}(\gth)\bigg\},
\end{aligned}
\end{equation}
where $\gth=\arcsinh(2z)\in[0,\log(\sqrt{2}+1)]$.
\end{lem}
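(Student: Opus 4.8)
The plan is to mirror the proof of the preceding lemma: first isolate the ``parent'' generating series carrying the harmonic number (the analogue of the closed forms $\tfrac{\gth}{2}$ and $\tfrac{\tanh\gth}{2}$ used above), then manufacture the factor $(2n+1)^{-p}$ by the integral transform \eqref{eqn-simple-fact}, and finally unfold the resulting one-dimensional integral into log-sine-cosine integrals by elementary binomial expansions.

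\emph{Step 1 (the base series).} First I would establish the closed form
\[\sum_{n=1}^\infty\binom{2n}{n}H_{2n}z^{2n+1}=\tan\gth\,\log\frac{\cos(\gth/2)}{\cos\gth},\qquad z=\tfrac12\sin\gth,\ \gth\in[0,\pi/2).\]
To obtain it, insert $H_{2n}=\int_0^1\frac{1-u^{2n}}{1-u}\,\dd u$, interchange the sum and the integral, and use $\sum_{n\ge1}\binom{2n}{n}x^{2n}=(1-4x^2)^{-1/2}-1$, arriving at $\int_0^1\frac{1}{1-u}\bigl(\frac{1}{\cos\gth}-\frac{1}{\sqrt{1-\sin^2\gth\,u^2}}\bigr)\dd u$ (the two pieces are individually divergent at $u=1$ and must be kept together). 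The substitution $u=\sin\phi/\sin\gth$ recasts this as $\frac{1}{\cos\gth}\int_0^\gth\frac{\cos\phi-\cos\gth}{\sin\gth-\sin\phi}\,\dd\phi$, and the sum-to-product identities collapse the integrand to $\tan\frac{\gth+\phi}{2}$, whose primitive yields the stated logarithm.

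\emph{Step 2 (applying the transform).} Feeding this into \eqref{eqn-simple-fact} with $n\mapsto 2n+1$ and changing variables by $w=\tfrac12\sin t$, one checks the clean simplifications $\frac{g(w)}{w}\,\dd w=\log\frac{\cos(t/2)}{\cos t}\,\dd t$ and $z/w=\sin\gth/\sin t$, so that
\[\sum_{n=1}^\infty\binom{2n}{n}\frac{H_{2n}}{(2n+1)^p}z^{2n+1}=\frac{1}{(p-1)!}\int_0^\gth\log^{p-1}\frac{\sin\gth}{\sin t}\,\log\frac{\cos(t/2)}{\cos t}\,\dd t.\]
\emph{Step 3 (unfolding).} Writing $\log\frac{\sin\gth}{\sin t}=\log(2\sin\gth)-\log(2\sin t)$ and expanding the $(p-1)$-th power binomially, the term $\log^{p-j}(2\sin\gth)$ picks up the coefficient $(-1)^{j-1}\binom{p-1}{j-1}$, exactly the prefactor in \eqref{eqn-lsc-2}. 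The inner integral is then split via $\log\frac{\cos(t/2)}{\cos t}=\log(2\cos(t/2))-\log(2\cos t)$. For the $-\log(2\cos t)$ part, the doubling substitution gives $\Lsc_{j,2}(2\gth)=-2\int_0^\gth\log^{j-1}(2\sin s)\log(2\cos s)\,\dd s$, producing $\tfrac12\Lsc_{j,2}(2\gth)$; for the $+\log(2\cos(t/2))$ part, the factorization $2\sin t=(2\sin(t/2))(2\cos(t/2))$ lets one expand $\log^{j-1}(2\sin t)$ into half-angle logs and recognize each integral as $-\Lsc_{l,j-l+2}(\gth)$, summing to $-\sum_{l=1}^j\binom{j-1}{l-1}\Lsc_{l,j-l+2}(\gth)$. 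Together these reproduce the brace in \eqref{eqn-lsc-2}. The hyperbolic identity \eqref{eqn-lshch-2} follows verbatim upon replacing $\sin,\cos$ by $\sinh,\cosh$: the base series is $\tanh\gth\,\log\frac{\cosh(\gth/2)}{\cosh\gth}$ (from $\sum_{n\ge1}\binom{2n}{n}(-1)^nx^{2n}=(1+4x^2)^{-1/2}-1$), and one uses $2\sinh t=(2\sinh(t/2))(2\cosh(t/2))$ together with $\Lshch_{j,2}(2\gth)=-2\int_0^\gth\log^{j-1}(2\sinh s)\log(2\cosh s)\,\dd s$.

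The only genuinely delicate point is Step 1: the harmonic-number integral representation produces individually divergent contributions that converge only as a difference, so the interchange of summation and integration must be justified with care, and the endpoint $z=1/2$ (where the base series itself diverges) should be reached by continuity, which is legitimate here precisely because the factor $(2n+1)^{-p}$ with $p\ge1$ restores convergence of both sides on the closed interval. Once the base series is in hand, the remaining steps are routine binomial bookkeeping and a change of variables.
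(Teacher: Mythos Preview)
Your proposal is correct and follows essentially the same three-step architecture as the paper: establish the closed form for the base series $\sum_{n\ge1}\binom{2n}{n}H_{2n}z^{2n+1}$, apply the integral transform \eqref{eqn-simple-fact}, then expand binomially and identify the resulting one-dimensional integrals as $\Lsc$ (resp.\ $\Lshch$) integrals --- Steps~2 and~3 match the paper almost line for line, including the split $\log\frac{\cos(t/2)}{\cos t}=\log(2\cos\tfrac{t}{2})-\log(2\cos t)$, the doubling $t\mapsto 2t$ for the second piece, and the half-angle factorization $2\sin t=(2\sin\tfrac{t}{2})(2\cos\tfrac{t}{2})$ for the first.

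The only genuine difference is your derivation of the base series in Step~1. The paper obtains it by quoting two generating-function identities of Davydychev--Kalmykov (for $\sum\binom{2n}{n}z^n/(2n)$ and $\sum\binom{2n}{n}H_{2n-1}z^n$ in terms of $\chi=\frac{1-\sqrt{1-4z}}{1+\sqrt{1-4z}}$), summing them, substituting $z\mapsto z^2$, and then setting $z=\tfrac12\sin\gth$. Your route via $H_{2n}=\int_0^1\frac{1-u^{2n}}{1-u}\,\dd u$, the substitution $u=\sin\phi/\sin\gth$, and the sum-to-product collapse to $\tan\tfrac{\gth+\phi}{2}$ is more self-contained and avoids the external reference; the paper's route is shorter once that reference is granted. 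Both land on the same closed form $\tan\gth\cdot\log\frac{\cos(\gth/2)}{\cos\gth}$, and thereafter the arguments coincide. (Your concern about the interchange in Step~1 is easily dispatched: since $\frac{1-u^{2n}}{1-u}\ge0$ on $[0,1]$ and $\sum\binom{2n}{n}H_{2n}|z|^{2n}<\infty$ for $|z|<\tfrac12$, Tonelli applies directly; the ``individually divergent'' pieces only arise \emph{after} you split the already-convergent integrand.)
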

\begin{proof}
We first prove \eqref{eqn-lsc-2}. By \cite[Eqn. (D.8), pp. 52--53]{DK2004}, we have
\begin{align}
\sum_{n=1}^{\infty}\binom{2n}{n}\frac{z^{n}}{2n}&=\log (1+\chi),\\
\sum_{n=1}^{\infty}\binom{2n}{n}H_{2n-1}z^{n}&=\frac{2}{1-\chi}[\chi\log(1+\chi)-(1+\chi)\log(1-\chi)],
\end{align}
where $\chi:=\frac{1-\sqrt{1-4z}}{1+\sqrt{1-4z}}$. Summing up the two equations, substituting $z^2$ for $z$ and then multiplying by $z$, we obtain
\begin{align}\label{eqn-1}
\sum_{n=1}^{\infty}\binom{2n}{n}H_{2n}z^{2n+1}=\frac{z}{\sqrt{1-4z^2}}\left(\log\left(\frac{2}{1+\sqrt{1-4z^2}}\right)-2\log\left(\frac{2\sqrt{1-4z^2}}{1+\sqrt{1-4z^2}}\right)\right).
\end{align}
By the change of variables $z=\frac{1}{2}\sin \gth$ for $z \in [0,1/2)$ and $\gth \in [0,\pi/2)$, we arrive at
\[\sum_{n=1}^{\infty}\binom{2n}{n}H_{2n}z^{2n+1}=\tan\gth\cdot\left(\log\left(2\cos\frac{\gth}{2}\right)-\log\left(2\cos\gth\right)\right).\]
Using \eqref{eqn-simple-fact}, we obtain
\begin{align}
&\sum_{n=1}^{\infty}\binom{2n}{n}\frac{H_{2n}}{(2n+1)^p}z^{2n+1}=\frac{1}{(p-1)!}\int_{0}^{z}\frac{\log^{p-1}\left(\frac{z}{w}\right)}{w}\cdot\sum_{n=1}^{\infty}\binom{2n}{n}H_{2n}w^{2n+1}\dd w\nonumber\\
&=\frac{1}{(p-1)!}\int_{0}^{\gth}\left(\log\left(2\sin\gth\right)-\log\left(2\sin t\right)\right)^{p-1}\cdot\left(\log\!\left(2\cos\frac{t}{2}\right)-\log\left(2\cos t\right)\right)\dd t
\,\,\tag{$w=\frac{1}{2}\sin t$}\\
&\begin{aligned}[c] = \frac{1}{(p-1)!}\sum_{j=1}^{p}\begin{aligned}[t]
 (-1)^{j-1} & \binom{p-1}{j-1} \log^{p-j}\left(2\sin\gth\right) \\
& \times \int_{0}^{\gth}\log^{j-1}\left(2\sin t\right)\cdot\left(\log\left(2\cos\frac{t}{2}\right)-\log\left(2\cos t\right)\right)\dd t. \label{eq-Harm-Binc-cent-1}
\end{aligned}\end{aligned}
\end{align}
We observe that
\begin{align}
\int_{0}^{\gth}\log^{j-1}\left(2\sin t\right)\cdot\left(-\log\left(2\cos t\right)\right)\dd t=\frac1{2}\Lsc_{j,2}(2\gth),\label{eq-Harm-Binc-cent-2}
\end{align}
and
\begin{align}
&\int_{0}^{\gth}\log^{j-1}\left(2\sin t\right)\cdot\log\left(2\cos\frac{t}{2}\right)\dd t\nonumber\\
&=\int_{0}^{\gth}\left(\log\left(2\sin\frac{t}{2}\right)+\log\left(2\cos\frac{t}{2}\right)\right)^{j-1}\cdot\log\left(2\cos\frac{t}{2}\right)\dd t\nonumber\\
&=\int_{0}^{\gth}\sum_{l=1}^{j}\binom{j-1}{l-1}\log^{l-1}\left(2\sin\frac{t}{2}\right)\log^{j-l+1}\left(2\cos\frac{t}{2}\right)\dd t\nonumber\\
&=-\sum_{l=1}^j \binom{j-1}{l-1}\Lsc_{l,j-l+2}(\gth).\label{eq-Harm-Binc-cent-3}
\end{align}
Inserting \eqref{eq-Harm-Binc-cent-2} and \eqref{eq-Harm-Binc-cent-3} in \eqref{eq-Harm-Binc-cent-1}, we complete the proof of \eqref{eqn-lsc-2} for $z\in[0,1/2)$. The case $z=1/2$ follows from continuity.

The proof of \eqref{eqn-lshch-2} is similar. In fact, note that \eqref{eqn-1} is valid for all complex numbers $z$ with $|z|<1/2$. If we substitute $\mi z$ for $z$ in \eqref{eqn-1}, we have
\[\sum_{n=1}^{\infty}\binom{2n}{n}H_{2n}(-1)^{n}z^{2n+1}=\frac{z}{\sqrt{1+4z^2}}\left(\log\left(\frac{2}{1+\sqrt{1+4z^2}}\right)-2\log\left(\frac{2\sqrt{1+4z^2}}{1+\sqrt{1+4z^2}}\right)\right).\]
Let $z=\frac{1}{2}\sinh\gth$ for $z\in[0,1/2)$ and $\gth\in[0,\log(\sqrt{2}+1))$, the above equation can be written as
\[\sum_{n=1}^{\infty}\binom{2n}{n}H_{2n}(-1)^{n}z^{2n+1}=\tanh\gth\cdot\left(\log\left(2\cosh\frac{\gth}{2}\right)-\log\left(2\cosh\gth\right)\right).\]
Therefore,
\begin{multline*}
\sum_{n=1}^{\infty}\binom{2n}{n}\frac{H_{2n}}{(2n+1)^p}(-1)^{n}z^{2n+1}=\frac{1}{(p-1)!}\int_{0}^{z}\frac{\log^{p-1}\left(\frac{z}{w}\right)}{w}\cdot\sum_{n=1}^{\infty}\binom{2n}{n}H_{2n}(-1)^{n}w^{2n+1}\dd w \\
=\frac{1}{(p-1)!}\int_{0}^{\gth} \big(\log\left(2\sinh\gth\right)-\log\left(2\sinh t\right)\big)^{p-1} \left(\log\left(2\cosh\frac{t}{2}\right)-\log\left(2\cosh t\right)\right)\dd t
 \end{multline*}
by substitution $w=\frac{1}{2}\sinh t$.
Identity \eqref{eqn-lshch-2} follows from expanding $\left(\log\left(2\sinh\gth\right)-\log\left(2\sinh t\right)\right)^{p-1}$ by the binomial theorem.
\end{proof}

\section{Proof of \protect\eqref{conj2}}
Setting $p=3$ and $z=1/4$ in \eqref{eqn-lsc-1} and \eqref{eqn-lsc-2}, we have $\gth=\pi/6$ and
\begin{align}
\sum_{n=0}^\infty\frac{\binom{2n}{n}}{(2n+1)^{4}16^n}&=\frac{1}{6}\Ls_4\left(\frac{\pi}{3}\right),\label{eqn-4-1}\\
\sum_{n=1}^\infty\frac{\binom{2n}{n}H_{2n}}{(2n+1)^{3}16^n}&=\Lsc_{3,2}\left(\frac{\pi}{3}\right)-2\Lsc_{3,2}\left(\frac{\pi}{6}\right)-4\Lsc_{2,3}\Big(\frac{\pi}{6}\Big)-2\Lsc_{1,4}\Big(\frac{\pi}{6}\Big).\label{eqn-4-2}
\end{align}
For ease of reading, we now outline our proof as the calculations are somewhat involved. We first express the functions $\Lsc_{j,k}(\theta)$ ($j+k=5$) for $j<k$ in terms of the ones with $j>k$ and then show that we can rewrite each of the latter, after subtracting a suitable linear term in $\theta$, in terms of the single-valued function $\tD_{4}$ (Lemmas \ref{lem-4-1} and \ref{lem-4-2}). Substituting $\theta = \frac{\pi}{6}$ and $= \frac{5\pi}{6}$ as in \eqref{eqn-Ls4-5Pi6}--\eqref{eqn-Lsc32-5Pi6}  then reveals that the ensuing rational multiples of $\pi\zeta(3)$  indeed conspire to match the one on the RHS of \eqref{conj2}. Moreover, upon realizing that $\beta(4)$ can be written as $\tD_4({\mathrm i})$, Conjecture \eqref{conj2} is reduced to showing the vanishing of a rational linear combination of only $\tD_4$-terms as in \eqref{TBP-0}.
It then remains to find---and in fact to concoct---suitable functional equations for $\tD_4$ which, after an appropriate specialization, match precisely this combination.

\medskip
\emph{Step 1}. It is clear from the definition that
\[ \Lsc_{j,k}(\gth)=\Lsc_{j,k}(\pi)-\Lsc_{k,j}(\pi-\gth), \quad \gth\in[0,\pi].  \]
The special values of $\Lsc_{j,k}$ at $\pi$ have been determined by L. Lewin \cite{Lew1958} and \cite[Section 7.9]{Lew1981}. As observed in \cite{BBSW2012}, Lewin's result can be stated in the form
\[-\frac{1}{\pi}\sum_{m, n=0}^{\infty}\Lsc_{m+1,n+1}(\pi)\frac{x^{m}}{m!}\frac{y^{n}}{n!}=\frac{2^{x+y}}{\pi} \frac{\Gamma\big(\frac{1+x}{2}\big)\Gamma\big(\frac{1+y}{2}\big)}{\Gamma\big(1+\frac{x+y}{2}\big)}.\]
In particular, it is known that
\begin{alignat}{2}
\Lsc_{1,4}(\gth) &=-\Ls_4(\pi-\gth)+\Ls_4(\pi),\quad & \Ls_4(\pi)&=\frac{3}{2}\pi\zeta(3),\label{eqn-Lsc14}\\
\Lsc_{2,3}(\gth) &=-\Lsc_{3,2}(\pi-\gth)+\Lsc_{3,2}(\pi),\quad & \Lsc_{3,2}(\pi)&=-\frac{1}{4}\pi\zeta(3)\label{eqn-Lsc23}.
\end{alignat}
Inserting \eqref{eqn-Lsc14} and \eqref{eqn-Lsc23} in \eqref{eqn-4-2}, we have
\begin{align}
\sum_{n=1}^\infty\frac{\binom{2n}{n}H_{2n}}{(2n+1)^{3}16^n}
=2\Ls_{4}\Big(\frac{5\pi}{6}\Big)+\Lsc_{3,2}\Big(\frac{\pi}{3}\Big)
-2\Lsc_{3,2}\Big(\frac{\pi}{6}\Big)+4\Lsc_{3,2}\Big(\frac{5\pi}{6}\Big)-2\pi\zeta(3).\label{eqn-4-3}
\end{align}

\emph{Step 2}. We introduce two different versions of the Bloch-Wigner-Ramakrishnan-Wojtkowiak-Zagier polylogarithm \cite{Wojtkowiak1989,Zag1990,Zag1990a}: for $|x|\le 1$, $x\ne 0,1$,
\begin{align}
D_{m}(x)={}& \RR_{m}\bigg(\sum_{j=0}^{m}\frac{(-\log |x|)^{m-j}}{(m-j)!}\Li_{j}(x)\bigg), \label{defn:D}  \\
\tD_{m}(x)={}& D_{m}(x)+(1-(-1)^m) \frac{\log^{m-1} |x|}{4\cdot m!}(2\log|1-x|-\log|x|)    \label{defn:tD} \\
={}&\RR_{m}\bigg(\sum_{j=1}^{m}\frac{(-\log |x|)^{m-j}}{(m-j)!}\Li_{j}(x)
+ \frac{\log^{m-1}|x|}{m!}\log|1-x| \bigg) ,   \notag
\end{align}
where  $\RR_{m}=\Im$ for $m$ even and $\RR_{m}=\Re$ for $m$ odd, and where we adopt Zagier's ad hoc convention (p.413 in loc.cit.) $\Li_{0}(x)\equiv -1/2$. It is easy to see that
\begin{equation}\label{equ:tD=0at0}
\lim_{x\to 0} \tD_{m}(x)=0.
\end{equation}
%We can check that $\tD_{m}(x)$ satisfies the \emph{complex conjugate relation} \eqref{D4-2} below after
We extend $\tD_{m}(x)$ to $\mathbb{C}\setminus\{0,1\}$ as a single-valued and real analytic function by
the \emph{inversion relation} \eqref{D4-1} and we can check that $\tD_{m}(x)$ satisfies the \emph{complex conjugate relation} \eqref{D4-2} below
\begin{align}
\tD_m(x)&=(-1)^{m-1} \tD_m(x^{-1}),     \label{D4-1}\\
\tD_m(x)&=(-1)^{m-1} \tD_m(\overline{x}).  \label{D4-2}
\end{align}
In particular, complex conjugate relation implies that
\begin{align}
\tD_{2m}(x)=0 \quad \text{for all }x\in\R.  \label{tDm-vanishOnR}
\end{align}
It also satisfies \emph{distribution relations} as follows: for any positive integer $N$ we have
\begin{align}
\tD_m(x^N)&=N^{m-1}\sum_{j=0}^{N-1} \tD_m\big(x e^{2j\pi\mi/N}\big) .\label{tDm-distribution}
\end{align}
Indeed, this follows easily from the fact that for all $|x|\le 1$ and $1\le j\le m$ we have
\begin{align*}
\log^{m-j}|x^N|\Li_{j}(x^N)&\, =N^{m-1}\log^{m-j}|x| \sum_{j=0}^{N-1} \Li_{j}\big(x e^{2j\pi\mi/N}\big)  ,\\
1-x^N &\, =\prod_{j=0}^{N-1} \big(1-x e^{2j\pi\mi/N}\big).
\end{align*}

\medskip
The following computational lemma will be used repeatedly below.  %% later.
\begin{lem}\label{lem-dDm}
Let $0<\gth<\pi$. Let $f(x)$ be a rational function of $x$ with real coefficients. Set
\begin{equation*}
g_f(\gth)=\frac12 \frac{\dd}{\dd\gth} \log f(e^{\mi\gth}) =\frac{\mi e^{\mi\gth} f'(e^{\mi\gth})}{2f(e^{\mi\gth})},\quad
h_f(\gth)= \frac{\dd}{\dd\gth} \Li_1 \big(f(e^{\mi\gth})\big)
=\frac{\mi e^{\mi\gth} f'(e^{\mi\gth})}{1-f(e^{\mi\gth}) }.
\end{equation*}
For any positive integer $m$ let $\gs_m=2\mi, \gd_m=0$ if $m$ is even and $\gs_m=2,\gd_m=1$ if $m$ is odd.
Then
\begin{align*}
\frac{\dd}{\dd\gth}D_{m} \big(f(e^{\mi\gth})\big)
\,=\,{} & (-1)^m \left(\!D_{m-1}\big(f(e^{\mi\gth})\big) - \RR_{m-1}\frac{\log^{m-1}|f(e^{\mi\gth})|}{2(m-1)!} \right)
\frac{g_f(\gth)+g_f(-\gth)}{\mi} \\
&\,+ \frac{\big({-}\log| f(e^{\mi\gth})|\big)^{m-1}}{\gs_m\cdot (m-1)!}
 \left(  \gd_m \Big(g_f(\gth)-g_f(-\gth)\Big) + h_f(\gth) +(-1)^m h_f(-\gth) \right), \end{align*}
 \begin{align*}
\frac{\dd}{\dd\gth}\tD_{m} \big( & f(e^{\mi\gth})\big)
= {} \\
& (-1)^m \left(\tD_{m-1}\big(f(e^{\mi\gth})\big) - \RR_{m-1}\frac{\log^{m-2}|f(e^{\mi\gth})|\log|1-f(e^{\mi\gth})|}{(m-1)!} \right)\frac{g_f(\gth)+g_f(-\gth)}{\mi} \\
&\,+ \frac{\big({-}\log| f(e^{\mi\gth})|\big)^{m-1}}{\gs_m\cdot (m-1)!}
 \left( h_f(\gth) +(-1)^m h_f(-\gth) \right)
+ \gd_m \frac{\log^{m-1}\big|f(e^{\mi\gth})\big|}{2\cdot m!} \big(h_f(-\gth)-h_f(\gth)\big) \\
&\,+  \gd_m \frac{(m-1)\log^{m-2} |f(e^{\mi\gth})|\log|1-f(e^{\mi\gth})|}{m!} \Big(g_f(\gth)-g_f(-\gth)\Big).
\end{align*}

\end{lem}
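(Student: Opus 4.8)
The plan is to prove both formulas by differentiating the defining expressions \eqref{defn:D}, \eqref{defn:tD} term by term and then repackaging the result through the two elementary symmetries forced by $f$ having real coefficients. First I would record the building blocks. Writing $x=f(e^{\mi\gth})$, so that $\frac{\dd x}{\dd\gth}=\mi e^{\mi\gth}f'(e^{\mi\gth})$, one has $\frac1x\frac{\dd x}{\dd\gth}=2g_f(\gth)$ and $\frac{\dd}{\dd\gth}\Li_1(x)=h_f(\gth)$, while $\frac{\dd}{\dd\gth}\Li_j(x)=2g_f(\gth)\Li_{j-1}(x)$ for $j\ge 2$ (using the genuine $\Li_{j-1}$, not the ad hoc convention). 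Since $f$ is real, $\overline{f(e^{\mi\gth})}=f(e^{-\mi\gth})$ and $\overline{f'(e^{\mi\gth})}=f'(e^{-\mi\gth})$, which yield the conjugation relations $\overline{g_f(\gth)}=-g_f(-\gth)$ and $\overline{h_f(\gth)}=-h_f(-\gth)$; in particular $g_f(\gth)+g_f(-\gth)$ is purely imaginary and $g_f(\gth)-g_f(-\gth)$ is real. Splitting $2\log|x|=\log f(e^{\mi\gth})+\log f(e^{-\mi\gth})$ (and likewise for $|1-x|$) then gives $\frac{\dd}{\dd\gth}\log|x|=g_f(\gth)-g_f(-\gth)$ and $\frac{\dd}{\dd\gth}\log|1-x|=\tfrac12\big(h_f(-\gth)-h_f(\gth)\big)$. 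I would carry out the computation on the open set where $|f(e^{\mi\gth})|<1$, so that the series may be differentiated termwise, and extend to the remaining admissible $\gth$ by real-analyticity of both sides together with the inversion relation \eqref{D4-1}.

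The heart of the argument is a clean recursion for the combination $\Phi_m(x):=\sum_{j=0}^m\frac{(-\log|x|)^{m-j}}{(m-j)!}\Li_j(x)$, so that $D_m=\RR_m\Phi_m$. Differentiating each summand by the product rule, the derivative of the power of $\log|x|$ contributes $-\frac{\dd}{\dd\gth}\log|x|\cdot\Phi_{m-1}$, while the derivative of $\Li_j$ (after shifting the index $j\mapsto j+1$) rebuilds $2g_f(\gth)\Phi_{m-1}$ together with two boundary terms: the $j=1$ summand gives $\frac{(-\log|x|)^{m-1}}{(m-1)!}h_f(\gth)$, and the missing $k=0$ summand, where $\Li_0\equiv-\tfrac12$, produces the stray term $g_f(\gth)\frac{(-\log|x|)^{m-1}}{(m-1)!}$. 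Using $-\frac{\dd}{\dd\gth}\log|x|+2g_f(\gth)=g_f(\gth)+g_f(-\gth)$, these collapse to
\[
\frac{\dd\Phi_m}{\dd\gth}=\big(g_f(\gth)+g_f(-\gth)\big)\Phi_{m-1}+\frac{(-\log|x|)^{m-1}}{(m-1)!}\big(h_f(\gth)+g_f(\gth)\big).
\]

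It then remains to apply $\RR_m$, using that $\RR_m$ is the projection opposite to $\RR_{m-1}$. Since $g_f(\gth)+g_f(-\gth)$ equals $\mi$ times the real quantity $\frac{g_f(\gth)+g_f(-\gth)}{\mi}$, the first term becomes $(-1)^m\frac{g_f(\gth)+g_f(-\gth)}{\mi}\RR_{m-1}\Phi_{m-1}=(-1)^m\frac{g_f(\gth)+g_f(-\gth)}{\mi}D_{m-1}$. In the second term the factor $\frac{(-\log|x|)^{m-1}}{(m-1)!}$ is real, so I would split $g_f(\gth)$ and $h_f(\gth)$ via the conjugation relations into symmetric and antisymmetric parts $g_f(\gth)\pm g_f(-\gth)$ and $h_f(\gth)\pm h_f(-\gth)$: the $h_f$ part assembles into $\frac{(-\log|x|)^{m-1}}{\gs_m(m-1)!}\big(h_f(\gth)+(-1)^m h_f(-\gth)\big)$, while the $g_f$ part splits according to parity, its $\big(g_f(\gth)+g_f(-\gth)\big)$-proportional piece (present only for even $m$) getting reabsorbed into the first-term coefficient to yield the advertised correction $-\RR_{m-1}\frac{\log^{m-1}|x|}{2(m-1)!}$, and its $\big(g_f(\gth)-g_f(-\gth)\big)$-proportional piece (present only for odd $m$) becoming the $\gd_m$-term. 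Matching $\gs_m,\gd_m$ against the two parities finishes the $D_m$ identity. For $\tD_m$ I would differentiate the extra term in \eqref{defn:tD}, which vanishes for even $m$ and equals $\frac{\log^{m-1}|x|}{2\,m!}\big(2\log|1-x|-\log|x|\big)$ for odd $m$; feeding in $\frac{\dd}{\dd\gth}\log|1-x|=\tfrac12\big(h_f(-\gth)-h_f(\gth)\big)$ produces exactly the two $\gd_m$-lines, with the $\big(g_f(\gth)-g_f(-\gth)\big)$-type contributions cancelling those already present, and I would record the consistency check for even $m$ (where $\tD_m=D_m$) using $\tD_{m-1}=D_{m-1}+\frac{\log^{m-2}|x|}{2(m-1)!}\big(2\log|1-x|-\log|x|\big)$.

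The main obstacle is not a single estimate but the disciplined bookkeeping of the projection $\RR_m$: one must consistently convert every $\Re$/$\Im$ into the combinations $g_f(\gth)\pm g_f(-\gth)$ and $h_f(\gth)\pm h_f(-\gth)$, track the parity-dependent constants $\gs_m,\gd_m$, and correctly handle the two boundary summands (the $j=0$ term via $\Li_0\equiv-\tfrac12$ and the $j=1$ term via $h_f$). A secondary point requiring care is the justification of termwise differentiation and the passage from $|f(e^{\mi\gth})|<1$ to all admissible $\gth$, which I would settle by the real-analyticity of both sides and \eqref{D4-1}.
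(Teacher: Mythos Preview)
Your proposal is correct and follows essentially the same approach as the paper: differentiate the defining sum for $D_m$ term by term, reorganize via the index shift and the $j=0,1$ boundary terms, and then handle the extra piece in \eqref{defn:tD} separately. The only organizational difference is that the paper expands $\RR_m$ in terms of $\gs_m$ \emph{before} differentiating, whereas you package everything into the complex quantity $\Phi_m=\sum_{j=0}^m\frac{(-\log|x|)^{m-j}}{(m-j)!}\Li_j(x)$, derive the clean recursion $\Phi_m'=(g_f(\gth)+g_f(-\gth))\Phi_{m-1}+\frac{(-\log|x|)^{m-1}}{(m-1)!}(h_f(\gth)+g_f(\gth))$, and only then project; the resulting bookkeeping is identical.
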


\begin{proof} By definition, we may rewrite $D_{m}\big(f(e^{\mi\gth})\big)$ as
\begin{align*}
D_{m}\big(f(e^{\mi\gth})\big)=&\, \sum_{j=0}^{m}\frac{\big(\tfrac{1}{2}\big({-}\log f(e^{\mi\gth})-\log f(e^{-\mi\gth})\big)\big)^{m-j}}{(m-j)!}
\cdot\frac{\Li_{j}\big(f(e^{\mi\gth})\big)-(-1)^m\Li_{j}\big(f(e^{-\mi\gth})\big)}{\gs_m}.
\end{align*}
Thus we have
\begin{align*}
& \frac{\dd}{\dd\gth}D_{m}\big(f(e^{\mi\gth})\big) \,=\, \\
& \frac{1}{\gs_m}\sum_{j=0}^{m-1}
    \frac{\big({-}\log| f(e^{\mi\gth})|\big)^{m-1-j}}{(m-1-j)!} \Big({-}g_f(\gth)+g_f(-\gth) \Big)
    \left(\Li_{j}\big(f(e^{\mi\gth})\big)-(-1)^m \Li_{j}\big(f(e^{-\mi\gth})\big) \right)   \\
& {} + \frac{1}{\gs_m}\sum_{j=2}^{m}\frac{\big({-}\log| f(e^{\mi\gth})|\big)^{m-j}}{(m-j)!}  \left( 2g_f(\gth)\Li_{j-1}\big(f(e^{\mi\gth})\big)
+(-1)^m  2g_f(-\gth)\Li_{j-1}\big(f(e^{-\mi\gth})\big)  \right)\\
&{} +\frac{1}{\gs_m}\frac{\big({-}\log |f(e^{\mi\gth})|\big)^{m-1}}{(m-1)!} \Big(h_f(\gth)+(-1)^m h_f(-\gth)\Big).
\end{align*}
Moving the $j=0$ term in the first sum to the end,
setting $j\to j+1$ in the second sum, and combining like terms, we then arrive at
\begin{align*}
&\, \frac{\dd}{\dd\gth}D_{m}\big(f(e^{\mi\gth})\big) \\[2ex]
& {} = \begin{aligned}[t]
& \frac{1}{\gs_m}\sum_{j=1}^{m-1}
    \frac{\big({-}\log| f(e^{\mi\gth})|\big)^{m-1-j}}{(m-1-j)!} \Big(g_f(\gth)+g_f(-\gth) \Big)
    \left(\Li_{j}\big(f(e^{\mi\gth})\big)+(-1)^m \Li_{j}\big(f(e^{-\mi\gth})\big) \right)   \\[-0.5ex]
&{} + \frac{1}{\gs_m} \frac{\big(-\log |f(e^{\mi\gth})|\big)^{m-1}}{(m-1)!} \Big( \gd_m  \Big(g_f(\gth)-g_f(-\gth)\Big) + h_f(\gth)+(-1)^m h_f(-\gth)\Big) \end{aligned} \\[2ex]
& {} = \begin{aligned}[t]
& (-1)^m \sum_{j=1}^{m-1}
    \frac{\big({-}\log| f(e^{\mi\gth})|\big)^{m-1-j}}{(m-1-j)!}
 \RR_{m-1}\Big(\Li_{j}\big(f(e^{\mi\gth})\big) \Big)\frac{g_f(\gth)+g_f(-\gth)}{\mi}    \\[-0.5ex]
&{} +\frac{1}{\gs_m} \frac{\big({-}\log |f(e^{\mi\gth})|\big)^{m-1}}{(m-1)!} \Big( \gd_m \Big(g_f(\gth)-g_f(-\gth)\Big) + h_f(\gth)+(-1)^m h_f(-\gth)\Big). \end{aligned}
\end{align*}
The expression for $D_m$ in the lemma now follows easily from the definition \eqref{defn:D}.

Turning to $\tD_m$, we only need to handle the extra term at the end of \eqref{defn:tD}.
Noticing that $2\log|1-x|=-\Li_1(x)-\Li_1(\bar{x})$ we have
\begin{align*}
   & \hspace{-2em} \frac{\dd}{\dd\gth} \frac{\log^{m-1}\big|f(e^{\mi\gth})\big|}{2\cdot m!} \Big(2\log\big|1-f(e^{\mi\gth})\big|-\log\big|f(e^{\mi\gth})\big|\Big)   \\[2ex]
\, = \,   & \frac{\log^{m-2}\big|f(e^{\mi\gth})\big|}{2\cdot m!}
\Big(2(m-1) \log\big|1-f(e^{\mi\gth})\big|
-m\log\big|f(e^{\mi\gth})\big|\Big)\big(g_f(\gth)-g_f(-\gth)\big) \\
& {} + \frac{\log^{m-1}\big|f(e^{\mi\gth})\big|}{2\cdot m!} \big(h_f(-\gth)-h_f(\gth)\big) .
\end{align*}
Now we can complete the proof of the lemma immediately.
\end{proof}

\begin{cor} \label{cor-dDm}
Notation as above. Put
$$A(\gth)=\log \Big|2\sin \frac{\gth}{2}\Big|= \log\big|1-e^{\mi\gth}\big|,\quad
B(\gth)=\log\Big|2\cos \frac{\gth}{2}\Big|=\log\big|1+e^{\mi\gth}\big|.$$
For any positive integer $m$ let $a^{\pm}_m(\gth)=1$ if $m$ is even,
and $a^{\pm}_m(\gth)=\mi (1\mp e^{\mi\gth})/(1\pm e^{\mi\gth})$ if $m$ is odd.
Then for all $m\ge 3$
\begin{align*}
\frac{\dd}{\dd\gth} A(\gth)& {} = -\frac{a_1^-}2, \qquad \frac{\dd}{\dd\gth} B(\gth)=-\frac{a_1^+}2,\\[1ex]
\frac{\dd}{\dd\gth} \tD_m\big({\pm} e^{\mi\gth}\big) & {} = (-1)^{m}  \tD_{m-1} \big({\pm}  e^{\mi\gth}\big) , \\[1ex]
\frac{\dd}{\dd\gth} \tD_m\big(1\pm e^{\mi\gth} \big)
& {} = \frac{(-1)^m}2 \tD_{m-1}\big(1\pm e^{\mi\gth} \big)
+(1+(-1)^m)\frac{A_\pm^{m-1}}{2\cdot (m-1)!} \qquad(A_+=B, A_-=A), \\[1ex]
\frac{\dd}{\dd\gth} \tD_m\left(\frac{1-e^{\mi\gth}}{1+e^{\mi\gth}}\right)
& {} =  \begin{aligned}[t]
& \frac{\gd_m(A-B)^{m-2}}{2\cdot m!} \Big((A-B) a_1^+  + (m-1)(\log2-B)(a_1^+- a_1^-)\Big) \\
& + \frac{(B-A)^{m-1}}{2\cdot (m-1)!} a_m^+
. \end{aligned}
\end{align*}
\end{cor}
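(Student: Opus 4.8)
The plan is to derive all five derivative formulas by specializing the two general formulas of Lemma~\ref{lem-dDm} to four explicit rational functions, namely $f(x)=\pm x$, $f(x)=1\pm x$ and $f(x)=\tfrac{1-x}{1+x}$, and then simplifying. For each such $f$ the first step is mechanical: record the four auxiliary quantities $\log|f(e^{\mi\gth})|$, $\log|1-f(e^{\mi\gth})|$, $g_f(\pm\gth)$ and $h_f(\pm\gth)$, after which one only has to decide which of the four terms in the $\tD_m$-formula survive. The derivatives of $A$ and $B$ need no machinery: differentiating $A(\gth)=\tfrac12\log\big((1-e^{\mi\gth})(1-e^{-\mi\gth})\big)$ and comparing with the definition of $a_1^-$ gives $\tfrac{\dd}{\dd\gth}A=-\tfrac12 a_1^-$, and symmetrically for $B$. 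These elementary computations double as the evaluations of $g_f(\pm\gth)$ and $h_f(\pm\gth)$ for the linear functions.

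For $f(x)=\pm x$ one has $|f(e^{\mi\gth})|=1$, so $\log|f|=0$; since $m\ge3$ this annihilates the second, third and fourth terms (each carries a positive power of $\log|f|$), while $g_f(\pm\gth)=\tfrac{\mi}2$ is constant, so $g_f(\gth)-g_f(-\gth)=0$ and $\big(g_f(\gth)+g_f(-\gth)\big)/\mi=1$. Only the leading term survives, collapsing to $(-1)^m\tD_{m-1}(\pm e^{\mi\gth})$. For $f(x)=1\pm x$ one finds $1-f=\mp e^{\mi\gth}$, hence $\log|1-f|=0$, which kills both the $\RR_{m-1}$-correction inside the leading term and the entire fourth term; moreover $h_f(\pm\gth)=-\mi$ is constant, killing the third term. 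Using $g_f(\gth)+g_f(-\gth)=\tfrac{\mi}2$ the leading term becomes $\tfrac{(-1)^m}2\tD_{m-1}(1\pm e^{\mi\gth})$, and the second term contributes only for even $m$, evaluating to $(1+(-1)^m)A_\pm^{m-1}/\big(2(m-1)!\big)$ once $\gs_m$ is inserted.

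The genuinely delicate case, and the main obstacle, is the Möbius function $f(x)=\tfrac{1-x}{1+x}$. A short computation gives $\log|f|=A-B$, $\log|1-f|=\log2-B$, $h_f(\gth)=-\mi/(1+e^{\mi\gth})$, and, crucially, $g_f(\gth)+g_f(-\gth)=0$, so the \emph{entire} leading $\tD_{m-1}$ term disappears. What survives are the second, third and fourth terms, and to bring them into the stated closed form one expresses $g_f(\gth)-g_f(-\gth)=\tfrac12(a_1^+-a_1^-)$ and rewrites the two combinations $h_f(\gth)\pm h_f(-\gth)$ through $a_1^+$. The difficulty is that the third and fourth terms are present only for odd $m$ (via $\gd_m$), so the two parities must be handled together: after inserting $\gs_m\in\{2,2\mi\}$, the sign of $(-\log|f|)^{m-1}=(B-A)^{m-1}$, and the powers $(A-B)^{m-1}$ and $(A-B)^{m-2}(\log2-B)$, one must check that the contributions of the three surviving terms recombine so that everything collapses onto the single expression built from $a_m^+$ and $a_1^\pm$. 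Keeping track of these signs and of the even/odd split is exactly where the bookkeeping is most error-prone; once the substitutions are made and like powers of $(A-B)$ are collected, the claimed formula follows.
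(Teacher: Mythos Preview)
Your proposal is correct and follows essentially the same approach as the paper: compute the auxiliary quantities $g_f(\gth)\pm g_f(-\gth)$ and $h_f(\gth)\pm h_f(-\gth)$ for $f(x)=1\pm x$ and $f(x)=\tfrac{1-x}{1+x}$, observe which terms in the $\tD_m$-formula of Lemma~\ref{lem-dDm} vanish (e.g.\ $\log|f|=0$ for $f(x)=\pm x$, $\log|1-f|=0$ for $f(x)=1\pm x$, and $g_f(\gth)+g_f(-\gth)=0$ for the M\"obius case), and then substitute and simplify. The paper's proof is organized slightly differently---it simply tabulates the values of $g_f(\gth)\pm g_f(-\gth)$ and $h_f(\gth)\pm h_f(-\gth)$ for the two nontrivial choices of $f$ and then states the resulting derivatives---but the substance is identical.
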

\begin{proof} By simple calculations,
\begin{align*}
f(x)=1\pm x: &\,  \qquad \left\{\aligned
g_f(\gth)+g_f(-\gth) ={} & \frac{\pm \mi e^{\mi\gth}}{2(1\pm e^{\mi\gth})}+\frac{\pm \mi e^{-\mi\gth}}{2(1\pm e^{-\mi\gth})}=\frac{\mi}2,\\
g_f(\gth)-g_f(-\gth) ={} & \frac{\dd}{\dd\gth} (A\ \text{or}\ B)=-\frac{a_1^\pm }2,\\
h_f(\gth)+h_f(-\gth) ={} & {-}2\mi, \qquad  h_f(\gth)-h_f(-\gth)=0,  \phantom{\frac12}
\endaligned \right.
\\
f(x)=\frac{1-x}{1+x}: &\,  \qquad  \left\{\aligned
g_f(\gth)+g_f(-\gth)={} & \frac{-\mi e^{\mi\gth}}{1-e^{2\mi\gth}}+\frac{-\mi e^{-\mi\gth}}{1-e^{-2\mi\gth}}=0,\\
g_f(\gth)-g_f(-\gth)={} & \frac{\dd}{\dd\gth} (A(\gth)-B(\gth))=\frac{a_1^+}2-\frac{a_1^-}2,\\
h_f(\gth)+h_f(-\gth)={} & \mi , \qquad  h_f(\gth)-h_f(-\gth)=a_1^+.  \phantom{\frac12}
\endaligned  \right.
\end{align*}
Hence
\begin{align*}
\frac{\dd}{\dd\gth} \tD_m\big(1\pm e^{\mi\gth} \big)
& {} =\frac{(-1)^m}2 \tD_{m-1}\big(1\pm e^{\mi\gth} \big)
+(1+(-1)^m)\frac{\log^{m-1}|1\pm e^{\mi\gth}|}{2\cdot (m-1)!} , \\[1ex]
\frac{\dd}{\dd\gth} \tD_m\left(\frac{1-e^{\mi\gth}}{1+e^{\mi\gth}}\right)
& {} = \begin{aligned}[t]
& \Big(\!{-}\!\log\Big|\frac{1-e^{\mi\gth}}{1+e^{\mi\gth}}\Big|\Big)^{m-1}
\cdot \frac{h_f(\gth) +(-1)^m h_f(-\gth) }{(m-1)!\cdot \gs_m}
+ \frac{\gd_m a_1^+}{2\cdot m!} \log^{m-1}\Big|\frac{1-e^{\mi\gth}}{1+e^{\mi\gth}}\Big|\\[-0.5ex]
& {} + \frac{\gd_m}{2m!}   (m-1)\log^{m-2} \Big|\frac{1-e^{\mi\gth}}{1+e^{\mi\gth}}\Big|\log\Big|\frac{2}{1+e^{\mi\gth}}\Big| \big(a_1^+- a_1^-\big).
\end{aligned}
\end{align*}
These quickly lead to the equalities in the corollary.
\end{proof}

\emph{Step 3}. Next, we express both $\Ls_4$ and $\Lsc_{3,2}$ in terms of polylogarithms.
\begin{lem}\label{lem-4-1}
The following expression for $\Ls_4(\gth)$ holds for all $\gth \in (0,\pi)$:
\begin{align}
\Ls_{4}(\gth)= \frac32 \zeta(3)\gth+\frac32\Big\{{-}\tD_{4}\big(e^{\mi\gth}\big)
  -4\tD_{4}\big(1-e^{\mi\gth}\big)\Big\}.       \label{equ:Ls4}
\end{align}
\end{lem}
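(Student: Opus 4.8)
The plan is to differentiate both sides of \eqref{equ:Ls4} in $\gth$ and thereby reduce the claim to a lower‑weight functional equation, and then to pin down the constant of integration by letting $\gth\to 0^+$. Write $A=A(\gth)=\log\big|2\sin\frac{\gth}{2}\big|$ as in Corollary~\ref{cor-dDm}, and note that for $\gth\in(0,\pi)$ both $e^{\mi\gth}$ and $1-e^{\mi\gth}$ lie in $\mathbb{C}\setminus\{0,1\}$, so every $\tD_m$ occurring below is real‑analytic on the relevant range.

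First I would differentiate. Directly from the definition, $\frac{\dd}{\dd\gth}\Ls_4(\gth)=-A^3$. On the right‑hand side, Corollary~\ref{cor-dDm} gives $\frac{\dd}{\dd\gth}\tD_4\big(e^{\mi\gth}\big)=\tD_3\big(e^{\mi\gth}\big)$ and $\frac{\dd}{\dd\gth}\tD_4\big(1-e^{\mi\gth}\big)=\frac12\tD_3\big(1-e^{\mi\gth}\big)+\frac{A^3}{6}$, so that
\[
\frac{\dd}{\dd\gth}\Big(\tfrac32\zeta(3)\gth+\tfrac32\big\{-\tD_4(e^{\mi\gth})-4\tD_4(1-e^{\mi\gth})\big\}\Big)
=\tfrac32\zeta(3)-\tfrac32\tD_3\big(e^{\mi\gth}\big)-3\tD_3\big(1-e^{\mi\gth}\big)-A^3 .
\]
Comparing with $-A^3$, identity \eqref{equ:Ls4}, up to an additive constant, is therefore equivalent to the weight‑$3$ functional equation
\[
\tD_3\big(e^{\mi\gth}\big)+2\tD_3\big(1-e^{\mi\gth}\big)=\zeta(3),\qquad \gth\in(0,\pi).
\]

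To establish this weight‑$3$ equation I would differentiate once more. By Corollary~\ref{cor-dDm}, $\frac{\dd}{\dd\gth}\tD_3(e^{\mi\gth})=-\tD_2(e^{\mi\gth})$ and $\frac{\dd}{\dd\gth}\tD_3(1-e^{\mi\gth})=-\frac12\tD_2(1-e^{\mi\gth})$ — the inhomogeneous term drops out because $1+(-1)^3=0$ — so the $\gth$‑derivative of the left‑hand side equals $-\tD_2(e^{\mi\gth})-\tD_2(1-e^{\mi\gth})$. Now $\tD_2$ is the classical Bloch–Wigner dilogarithm, and the reflection relation $\tD_2(1-x)=-\tD_2(x)$ (one of its standard functional equations, obtained by taking imaginary parts of Euler's identity $\Li_2(x)+\Li_2(1-x)=\zeta(2)-\log x\log(1-x)$ and combining with the defining $\log|x|\arg(1-x)$ terms) makes this derivative vanish identically. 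Hence $\tD_3(e^{\mi\gth})+2\tD_3(1-e^{\mi\gth})$ is constant on $(0,\pi)$; letting $\gth\to 0^+$ and using that on the unit circle $\tD_3(e^{\mi\gth})=\Re\Li_3(e^{\mi\gth})\to\zeta(3)$ (every term carrying a factor $\log|e^{\mi\gth}|=0$ drops out), together with $\tD_3(1-e^{\mi\gth})\to\tD_3(0)=0$ by \eqref{equ:tD=0at0}, identifies the constant as $\zeta(3)$.

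With the weight‑$3$ equation in hand, the derivatives of the two sides of \eqref{equ:Ls4} coincide on $(0,\pi)$, so the two sides differ by a constant; to see that this constant is zero I would once more let $\gth\to 0^+$. The left‑hand side $\Ls_4(\gth)\to 0$, while on the right‑hand side $\frac32\zeta(3)\gth\to0$, $\tD_4(e^{\mi\gth})=\Im\Li_4(e^{\mi\gth})\to\Im\zeta(4)=0$, and $\tD_4(1-e^{\mi\gth})\to\tD_4(0)=0$, so the right‑hand side also tends to $0$, completing the proof. The only genuinely delicate points are the invocation of the Bloch–Wigner reflection (which is not among the relations \eqref{D4-1}–\eqref{tDm-distribution} and must be supplied) and the careful treatment of the boundary limits, where indeterminate products such as $\log^2|e^{\mi\gth}|\,\log|1-e^{\mi\gth}|$ must be recognized as \emph{identically} zero on the unit circle rather than merely convergent; everything else is a routine application of Corollary~\ref{cor-dDm}.
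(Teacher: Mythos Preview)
Your proposal is correct and follows essentially the same route as the paper's proof: differentiate twice using Corollary~\ref{cor-dDm}, reduce to the Bloch--Wigner identity $D_2(x)+D_2(1-x)=0$ (which the paper cites as \cite[Eqn.~(4)]{Zag1990a} rather than rederiving from Euler's reflection), and pin down both integration constants by letting $\gth\to 0^+$ via $\tD_4(1)=0$ and \eqref{equ:tD=0at0}. The only cosmetic difference is that the paper fixes the constants before taking each derivative while you do so afterwards.
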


\begin{proof} First we observe that $\tD_{4}(1)=0$ by \eqref{tDm-vanishOnR}.
Thus, taking $\gth\to 0$ we see that it suffices to prove the equality of the derivatives of
both sides of \eqref{equ:Ls4}. Since
\begin{equation*}
 \frac{\dd}{\dd\gth} \Ls_{4}(\gth)=-\log^3\Big|2\sin \frac{\gth}{2}\Big|=-A^3,
\end{equation*}
by Corollary~\ref{cor-dDm} we have
\begin{align}
& \frac{\dd}{\dd\gth}\left\{\frac23\Ls_{4}(\gth)-\zeta(3)\gth+\tD_{4}\big(e^{\mi\gth}\big)
    +4\tD_{4}\big(1-e^{\mi\gth}\big)\right\}  \nonumber\\
& {} = -\zeta(3) + \frac{\Li_3 (e^{\mi\gth} )+\Li_3 (e^{-\mi\gth} ) }{2}
+2\tD_3\big(1-e^{\mi\gth}\big). \label{equ:Ls3Step2}
\end{align}
Since $\Li_3(1)=\zeta(3)$ and $\lim_{\gth\to 0}  \tD_3\big(1-e^{\mi\gth}\big) =0$ by \eqref{equ:tD=0at0},
it suffices to prove the derivative of \eqref{equ:Ls3Step2} vanishes.
Clearly $\tD_m(x)=D_m(x)$ for all even $m$ by \eqref{defn:tD}.
Thus, using Corollary \ref{cor-dDm} again we see that
\begin{align}
 \frac{\dd}{\dd\gth}{\big(\mathrm {RHS\ of\ }}\eqref{equ:Ls3Step2}\big)
= {} &
-\frac{\Li_2 (e^{\mi\gth} )-\Li_2 (e^{-\mi\gth} ) }{2i} -\tD_2 \big(1-e^{\mi\gth}\big)    \notag\\
= {} & -\tD_2 (e^{\mi\gth})  - \tD_2 \big(1-e^{\mi\gth}\big)
\,=\,-D_2 (e^{\mi\gth}) - D_2 \big(1-e^{\mi\gth}\big) \,=\, 0 \label{equ:completeD2}
\end{align}
by  \cite[Eqn.~(4)]{Zag1990a}.  This completes the proof of Lemma \ref{lem-4-1}.
\end{proof}

\begin{lem}\label{lem-4-2}
The following expression for $\Lsc_{3,2}(\gth)$ holds for all $\gth \in (0,\pi)$:
\begin{align}
 \begin{aligned}[c]
 \Lsc_{3,2}(\gth) = {}
-\frac{\zeta(3)}{4}\gth & -\frac{1}{2}\tD_{4}\big({-}e^{\mi\gth}\big)
-\tD_{4}\big(e^{\mi\gth}\big)
+2\tD_{4}\big(1+e^{\mi\gth}\big) \\[-0.5ex]
& +2\tD_{4}\Big(\frac{1-e^{\mi\gth}}{1+e^{\mi\gth}}\Big)
-\frac{1}{2}\tD_{4}\big(1-e^{2\mi\gth}\big) .  \end{aligned} \label{equ:Lsc32}
\end{align}
\end{lem}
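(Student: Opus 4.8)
The plan is to mimic the differentiate-and-descend argument used for Lemma~\ref{lem-4-1}, but now to push it down through three weights. First I would check that both sides of \eqref{equ:Lsc32} vanish as $\gth\to 0^+$. Indeed the five arguments $-e^{\mi\gth}$, $e^{\mi\gth}$, $1+e^{\mi\gth}$, $\tfrac{1-e^{\mi\gth}}{1+e^{\mi\gth}}$, $1-e^{2\mi\gth}$ tend to $-1,1,2,0,0$ respectively, so by \eqref{tDm-vanishOnR} (for the three real limits) and \eqref{equ:tD=0at0} (for the two limits equal to $0$) the right-hand side tends to $0$, while $\Lsc_{3,2}(0)=0$ by definition. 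It therefore suffices to prove that the derivatives of the two sides agree on $(0,\pi)$.

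Next I would differentiate. On the left, the definition gives $\tfrac{\dd}{\dd\gth}\Lsc_{3,2}(\gth)=-A^2B$, with $A,B$ as in Corollary~\ref{cor-dDm}. On the right I would apply Corollary~\ref{cor-dDm} to each summand; the only argument not directly covered is $1-e^{2\mi\gth}$, which I would handle by the substitution $\phi=2\gth$ so as to reduce it to the case $1-e^{\mi\phi}$ (noting $\log|1-e^{2\mi\gth}|=A+B$). Since $m=4$ is even, every $\tD_4$ drops to a $\tD_3$ while producing a polynomial correction in $A$ and $B$; the decisive point is that these corrections---arising from $1+e^{\mi\gth}$, from $\tfrac{1-e^{\mi\gth}}{1+e^{\mi\gth}}$, and from the rescaled $1-e^{2\mi\gth}$---telescope to exactly $-A^2B$, matching the left-hand derivative. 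What is left is a pure weight-$3$ statement: a fixed rational combination of $\tD_3(-e^{\mi\gth})$, $\tD_3(e^{\mi\gth})$, $\tD_3(1+e^{\mi\gth})$, $\tD_3(1-e^{2\mi\gth})$ must equal the constant $\tfrac14\zeta(3)$.

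To establish this weight-$3$ identity I would descend once more. Differentiating it (again via Corollary~\ref{cor-dDm}, with the same $\phi=2\gth$ device for the last term) reduces it to the vanishing of a Bloch--Wigner combination,
\[
\tD_2(e^{\mi\gth})+\tfrac12\tD_2\big({-}e^{\mi\gth}\big)-\tfrac12\tD_2\big(1+e^{\mi\gth}\big)+\tfrac12\tD_2\big(1-e^{2\mi\gth}\big)=0 .
\]
I expect this base case to follow from the distribution relation \eqref{tDm-distribution} with $N=2$, which rewrites $\tD_2(e^{2\mi\gth})$ as $2\big(\tD_2(e^{\mi\gth})+\tD_2(-e^{\mi\gth})\big)$, together with the reflection $\tD_2(1-x)=-\tD_2(x)$ of \cite[Eqn.~(4)]{Zag1990a} applied both to $1-e^{2\mi\gth}=1-(e^{\mi\gth})^2$ and to $1+e^{\mi\gth}=1-(-e^{\mi\gth})$; after these substitutions all terms cancel. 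Knowing that the weight-$3$ combination is then constant, I would pin down its value by letting $\gth\to 0^+$, where it becomes $-\tfrac12\tD_3(-1)-\tD_3(1)+\tD_3(2)$. Using $\tD_3(1)=\zeta(3)$, $\tD_3(-1)=\Li_3(-1)=-\tfrac34\zeta(3)$, and $\tD_3(2)=\tD_3(\tfrac12)=\tfrac78\zeta(3)$ (the last obtained via the inversion \eqref{D4-1} and the closed form of $\Li_3(\tfrac12)$) yields precisely $\tfrac14\zeta(3)$.

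The step I expect to be the main obstacle is the middle one: confirming that the weight-$4$ polynomial corrections telescope exactly to $-A^2B$. This is where the sign and coefficient bookkeeping is most unforgiving, particularly for the $\tfrac{1-e^{\mi\gth}}{1+e^{\mi\gth}}$ term and for the rescaled $1-e^{2\mi\gth}$ term, since any error there would leave an uncancelled cubic in $A,B$ that the bounded $\tD_3$-combination could never absorb. Getting this cancellation exactly right is thus the linchpin of the argument, after which the remaining descent to the dilogarithm reflection is comparatively routine.
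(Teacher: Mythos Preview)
Your overall scheme—check the value at $\gth\to 0$, differentiate, and descend until the identity reduces to a Bloch--Wigner relation—is exactly the paper's strategy, and your final $\tD_2$ identity (after using $\tD_2(1-x)=-\tD_2(x)$ and the $N=2$ distribution) is equivalent to the one the paper reaches.

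Where your plan diverges from the paper, and where it becomes risky, is the claim that after one differentiation the polynomial corrections ``telescope to exactly $-A^2B$'', leaving a \emph{pure} constant $\tD_3$ combination. If you compute directly from Corollary~\ref{cor-dDm} the polynomial part of $\tfrac{\dd}{\dd\gth}(\text{RHS of }\eqref{equ:Lsc32})$ is
\[
\frac{B^3}{3}+\frac{(B-A)^3}{6}-\frac{(A+B)^3}{6}
=\frac{B^3}{3}-AB^2-\frac{A^3}{3},
\]
which is \emph{not} $-A^2B$; the defect is $\tfrac13(A-B)^3$. So the neat separation into ``polynomial $=-A^2B$'' plus ``constant $\tD_3$ combination $=\tfrac14\zeta(3)$'' does not occur as you describe. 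The paper avoids this trap by not separating at all: it evaluates the full first derivative in the limit $\gth\to 0$ (using the inverted form $\tD_4\big(\tfrac{1}{1+e^{\mi\gth}}\big)$ together with the known values of $\Li_1(\tfrac12),\Li_2(\tfrac12),\Li_3(\tfrac12)$, rather than invoking $\tD_3(2)=\tfrac78\zeta(3)$ alone), and then differentiates the entire expression once more. Only at the second-derivative level do all polynomial pieces—those coming from differentiating the weight-$4$ log terms \emph{and} those produced by the $\tD_3$ derivatives—cancel simultaneously, leaving the pure $\tD_2$ identity. Your identification of the telescoping step as ``the main obstacle'' is thus well placed: it is not merely delicate bookkeeping, it fails as stated, and the fix is to postpone the separation until after the second differentiation, exactly as the paper does.
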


\begin{proof} The proof of this lemma is completely similar to that of Lemma \ref{lem-4-1}.
As above, let $A=A(\gth)$ and $B=B(\gth)$.
By straightforward computations using Corollary~\ref{cor-dDm} we find that
\begin{align*}
\frac{\dd}{\dd\gth} \Lsc_{3,2}(\gth) = {} & - \log^2\Big|2\sin \frac{\gth}{2}\Big|\log\Big|2\cos \frac{\gth}{2}\Big|=-A^2 B,\\[1ex]
\frac{\dd}{\dd\gth} \tD_{4}\big(e^{\mi\gth}\big)
= {} &\tD_3 \big(e^{\mi\gth} \big)
\quad \xrightarrow{\dd/\dd\gth} \quad -\tD_2 \big(e^{\mi\gth}\big)    ,\\[1ex]
\frac{\dd}{\dd\gth} \tD_{4}\big({-}e^{\mi\gth}\big)
= {} &\tD_3 (-e^{\mi\gth} )
\quad \xrightarrow{\dd/\dd\gth} \quad  -\tD_2 (-e^{\mi\gth})   ,\\[1ex]
\frac{\dd}{\dd\gth} \tD_{4}\big(1+e^{\mi\gth}\big)
= {} &\frac{1}2 \tD_3\big(1+e^{\mi\gth} \big)+\frac{B^3}{6}
 \quad \xrightarrow{\dd/\dd\gth} \quad  {-}\frac{1}4 \tD_2\big(1+e^{\mi\gth} \big)-\frac{1}{4}B^2 a_1^+ ,\\[1ex]
    \frac{\dd}{\dd\gth} \tD_{4}\Big(\frac{1}{1+e^{\mi\gth}}\Big)
= {} & -\frac{ B^3}{12}
    -\frac{1}{2} \sum_{j=1}^{3} \frac{B^{3-j}}{(3-j)!}
    \left(\frac{\Li_{j}\big(\frac{1}{1+e^{\mi\gth}}\big)}{1+e^{\mi\gth}}
    +\frac{\Li_{j}\big(\frac{1}{1+e^{-\mi\gth}}\big)}{1+e^{-\mi\gth}} \right)\\[-1ex]
&  (\text{which is used to compute the limit as }\gth\to 0) ,\\[1ex]
\frac{\dd}{\dd\gth} \tD_{4}\Big(\frac{1-e^{\mi\gth}}{1+e^{\mi\gth}}\Big)
= {} &\frac{(B-A)^3}{12} \quad \xrightarrow{\dd/\dd\gth} \quad  \frac{ (A-B)^2(a_1^+ -a_1^-)}{8} ,\\[1ex]
\frac{\dd}{\dd\gth} \tD_{4}\big(1-e^{2\mi\gth}\big)
= {} &  \tD_3\big(1-e^{2\mi\gth} \big)  + \frac{ (A+B)^3}{3}
\quad \xrightarrow{\dd/\dd\gth} \quad  -\tD_2\big(1-e^{2\mi\gth} \big)-\frac{1}{2}(A+B)^2(a_1^+ +a_1^-)  .
\end{align*}
Thus by taking $\gth\to 0$ we see that the difference between the left-hand and right-hand sides of
$\frac{\dd}{\dd\theta}\big({\mathrm {RHS\ of\ }}\eqref{equ:Lsc32}\big)$ is
\begin{align*}
\frac{\log^3 (2)}{6}+\frac{5\zeta(3)}{4}+\frac12\Li_3(-1)
-\sum_{j=1}^{3} \frac{\log^{3-j}(2)}{(3-j)!}\Li_{j}\Big(\frac{1}{2}\Big) =0
\end{align*}
by the identities (see \cite[(1.16), (6.5) and (6.12)]{Lew1981})
\begin{align*}
\Li_2\Big(\frac12\Big)= {} & \frac12 \big(\zeta(2)-\log^2 (2)\big),\\
\Li_3\big({-}1\big) = {} & -\frac{3}{4} \zeta(3) , \\
\Li_3\Big(\frac{1}{2}\Big)= {} &  \frac{7}{8}\zeta(3)-\frac{1}{12}\pi^2\log(2)+\frac{1}{6}\log^3(2).
\end{align*}
Thus we only need to show the second derivatives of both sides of \eqref{equ:Lsc32} agree:
\begin{align}\label{equ:Lsc32Step1}
\begin{aligned}[c]
\frac{\dd}{\dd\gth} \bigg({-}2A^2B+\frac{\dd}{\dd\gth} \bigg\{
\begin{aligned}[t]
2\tD_{4}\big(e^{\mi\gth}\big)
& +\tD_{4}\big(-e^{\mi\gth}\big)
-4\tD_{4}\big(1+ e^{\mi\gth} \big)  \\
& {} +\tD_{4}\big(1-e^{2\mi\gth}\big)
-4\tD_{4}\bigg(\frac{1-e^{\mi\gth}}{1+e^{\mi\gth}}\bigg)
\! \bigg\}  \bigg) \stackrel{?}{=}0. \end{aligned} \end{aligned}
\end{align}
Now we have
\begin{align*}
\text{LHS of }\eqref{equ:Lsc32Step1}= {} &
\frac{\dd}{\dd\gth} \bigg( \begin{aligned}[t]
 -2A^2B
+2\tD_{3}\big(e^{\mi\gth}\big)
& {} +\tD_{3}\big({-}e^{\mi\gth}\big)
-4\bigg( \frac{1}2 \tD_3\big(1+e^{\mi\gth} \big)+\frac{B^3}{6} \bigg) \\[-0.5ex]
&
{} +  \tD_3\big(1-e^{2\mi\gth} \big) +\frac{ (A+B)^3}{3} -\frac{ (A-B)^3}{3} \bigg) \end{aligned} \\[1ex]
= {} & \begin{aligned}[t]
& A^2 a_1^+ + 2AB  a_1^- - \tD_2\big({-}e^{\mi\gth}\big)-2\tD_2\big(e^{\mi\gth}\big)
+\tD_2\big(1+e^{\mi\gth} \big)+ B^2 a_1^+  \\[-0.5ex]
&{}-\tD_2\big(1-e^{2\mi\gth} \big)-\frac{1}{2}(A+B)^2(a_1^+ +a_1^-)
 -\frac{1}{2}(A-B)^2(a_1^+ - a_1^-) \end{aligned} \\[1ex]
= {} & \tD_2\big(e^{2\mi\gth} \big)-2 \tD_2\big({-}e^{\mi\gth}\big)-2\tD_2\big(e^{\mi\gth}\big) \,=\, 0
\end{align*}
by \eqref{equ:completeD2} and then the distribution relation.
This completes the proof of the lemma.
\end{proof}

\emph{Step 4}. %%So far, it is clear that Lemma \ref{lem-4-1} and Lemma \ref{lem-4-2} reduce \eqref{conj2} to an identity between $\tD_4$ values. {\tcr Is it? What about the $\pi\zeta(3)$ term?}
We will need the following functional equation of $\tD_4$, which is a variant of Kummer's $\Li_4$ equation \cite[Eqn. (7.78)]{Lew1981}. (Note $\Lambda_4(x)$ therein is closely related to $\mathrm{Li}_4(-x)$, in particular it only differs by products of lower weight terms.  In order to convert from \cite[Eqn. (7.78)]{Lew1981} to the \( \tD_4 \) functional equation we essentially only need to add a negative sign to all the arguments from \cite[Eqn. (7.78)]{Lew1981}, and drop any product terms.)

Let $\Q[\CP^1]$ be the set of finite $\Q$-linear combinations $\sum c_j[x_j]$ with $c_j\in\Q$, $x_j\in \CP^1$. We can then linearly
extend $\tD_m$ over $\Q[\CP^1]$.

\begin{lem}\label{lem-D4}(Kummer)
%We have for $\xi=1-x$, $\eta=1-y$, that the following \( \tD_4 \) functional equation holds
%\begin{align*}
%F(x,y ) & {} :=  \begin{aligned}[t]
%&
%\Big[\frac{x^{2} y}{\eta^{2} \xi}\Big]
%+\Big[\frac{x y^{2}}{\eta\xi^{2}}\Big]
%+\Big[{-}\frac{\eta x^{2}y}{\xi}\Big]
%+\Big[{-}\frac{\xi xy^{2}}{\eta}\Big] \\
%&
%-3\Big[{-}\frac{x}{\eta\xi}\Big]
%-3\Big[{-}\frac{y}{\eta\xi}\Big]
%-3\Big[{-}\frac{\eta x}{\xi}\Big]
%-3\Big[{-}\frac{\xi y}{\eta}\Big]
%-3\Big[\frac{x}{\eta}\Big]
%-3\Big[\frac{y}{\xi}\Big]
%-3\Big[\eta x\Big]
%-3\Big[\xi y\Big]
%\\
%&
%+6\Big[{-}\frac{x}{\xi}\Big]
%+6\Big[{-}\frac{y}{\eta}\Big]
%-6\Big[{-}\frac{xy}{\eta}\Big]
%-6\Big[{-}\frac{xy}{\xi}\Big]
%+6\Big[x\Big]
%+6\Big[y\Big]
%-6\Big[\frac{x y}{\eta\xi}\Big]
%-6\Big[xy\Big]
%\end{aligned} \\[1ex]
%& \in\ker\tD_4\,.
%\end{align*}
%Note there is a symmetry \( x \leftrightarrow y \) (also taking \( \xi \leftrightarrow \eta \)), which switches each pair of terms, except for the last two which are pointwise fixed under \( x \leftrightarrow y \).
For any $x,y\in {\mathbb C}\setminus \{0,1\}$, set $\xi=\xi_x:=1-x$, $\eta=\eta_y:=1-y$ and
	\begin{align*}
	H(x,y) & {} :=  \begin{aligned}[t]
	&
	\Big[\frac{x^{2} y}{\eta^{2} \xi}\Big]
	+\Big[{-}\frac{\eta x^{2}y}{\xi}\Big]
	-3\Big[{-}\frac{x}{\eta\xi}\Big]
	-3\Big[{-}\frac{\eta x}{\xi}\Big]
	-3\Big[\frac{x}{\eta}\Big]
	-3\Big[\eta x\Big]
	\\
	&
	+6\Big[{-}\frac{x}{\xi}\Big]
	-6\Big[{-}\frac{xy}{\eta}\Big]
	+6\Big[x\Big]
	-3\Big[\frac{x y}{\eta\xi}\Big]
	-3\Big[xy\Big].
	\end{aligned}
	\end{align*}	
Then we have
\begin{equation*}
F(x,y):=H(x,y)+H(y,x) \qquad{\text{is mapped to }0\text{ under }} \tD_4\,.
%\in\ker\tD_4\,.
\end{equation*}

\begin{proof}
	In order to verify that \( \tD_4 \big(F(x,y) \big) = 0 \) for all \( x,y \), we apply \cite[Proposition 1]{Zag1990a}, which states that if \( \{n_i, x_i(t)\} \) is a collection of integers \( n_i \) and rational functions of one variable \( x_i(t) \), satisfying
	\begin{equation}\label{eqn:zag_prop1_tensor}
		\sum_i n_i [ x_i(t)]^{m-2} \otimes \big( [x_i(t) ] \wedge [1 - x_i(t)] \big) = 0 \,,
	\end{equation}
	in \( \operatorname{Sym}^{m-2} (\mathbb{C}(t)^\times) \otimes \wedge^2 ( \mathbb{C}(t)^\times) \otimes_\mathbb{Z} \mathbb{Q} \).  Then \( \sum_i n_i \tD_m\big(x_i(t)\big) = \text{constant} \).  In this tensor condition the tensors are multiplicative \( (a b) \otimes c = a \otimes c + b \otimes c \), and we can ignore torsion (multiplication by roots of unity) in each slot.  This tensor condition is closely related to the $\otimes^m$-invariant (``symbol'') of multiple polylogarithms \cite{Go-Gal}, and amounts to a convenient reformulation of the derivative of \( \tD_m(x_i(t)) \) for the purposes of calculation.
	\medskip
	
	Set \( m = 4 \), and fix \( y = y_0 \in \mathbb{C} \), it is then straightforward (if tedious) to check that \eqref{eqn:zag_prop1_tensor} vanishes for the list of coefficients and arguments in \( F(x,y_0) \).  Hence for any fixed \( y = y_0 \) the combination \( \tD_4 (F(x,y_0))  \) is constant.  By the symmetry of \( F(x,y) \) with respect to \( x \leftrightarrow y \), we also have by the same calculation that for any fixed \( x = x_0 \), the combination \(\tD_4 ( F(x_0,y)) \) is constant.  It follows that \( \tD_4 (F(x_1,y_1)) = \tD_4 (F(x_2,y_1)) = \tD_4 (F(x_2,y_2)) \) for any \((x_1,y_1), (x_2,y_2) \in \mathbb{C}^2 \), so \( \tD_4 (F(x,y)) \) is constant overall.  Since \( \tD_4 \) vanishes on the real line, and by specializing for example \( x = y = \tfrac{1}{2} \) all arguments in \( F(x,y) \) are real, this constant is necessarily 0.  We have therefore established the required functional equation.
\end{proof}
\end{lem}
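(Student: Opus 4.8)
The plan is to invoke Zagier's criterion (Proposition 1 of \cite{Zag1990a}), which reduces a proposed $\tD_4$ functional equation to a purely combinatorial identity among symbols. Recall that criterion: if the arguments $x_i = x_i(t)$ are rational functions of a single variable $t$ and the tensor
\[
\sum_i n_i\, [x_i(t)]^{2} \otimes \big([x_i(t)] \wedge [1 - x_i(t)]\big)
\]
vanishes in $\operatorname{Sym}^{2}(\mathbb{C}(t)^\times) \otimes \wedge^2(\mathbb{C}(t)^\times) \otimes_{\mathbb{Z}} \mathbb{Q}$ --- working multiplicatively and ignoring roots of unity in each slot --- then $\sum_i n_i \tD_4(x_i(t))$ is constant. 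Since $F(x,y)$ is symmetric in its arguments, the first move is to freeze $y = y_0$ and regard $F(x, y_0)$ as a combination in the single variable $x$, with $\eta = 1 - y_0$ now a constant scalar.

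The computational heart of the argument --- and the step I expect to be the genuine obstacle --- is verifying that this symbol tensor vanishes. For each argument $x_i$ one must factor both $x_i$ and $1 - x_i$ into multiplicative generators. The factors of $x_i$ are read off directly (monomials in $x$, $\xi = 1-x$, and the constants $y_0$, $\eta$), but the factors $1 - x_i$ are the delicate part: clearing denominators produces quadratic (and higher) numerators in $x$ whose roots are \emph{a priori} unrelated new letters in the alphabet. It is precisely the content of Kummer's relation that, after assembling $\sum_i n_i [x_i]^2 \otimes ([x_i] \wedge [1-x_i])$ and collecting the coefficient of each basis tensor, all of these extraneous factors cancel. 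I would organize this bookkeeping argument by argument, building the table of factorizations first and then checking cancellation slot by slot; this is elementary but lengthy, and it is where the structure of the equation really lives.

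Granting the vanishing of the symbol, Zagier's criterion yields that $\tD_4(F(x, y_0))$ is independent of $x$ for each fixed $y_0$. By the $x \leftrightarrow y$ symmetry of $F$, the identical computation shows $\tD_4(F(x_0, y))$ is independent of $y$ for each fixed $x_0$; chaining these two constancy statements then forces $\tD_4(F(x,y))$ to be globally constant on $\mathbb{C}^2$. To evaluate this constant I would specialize to $x = y = \tfrac12$, where $\xi = \eta = \tfrac12$ and every argument of $F$ is real; since $\tD_4$ vanishes identically on $\mathbb{R}$ by the complex-conjugation relation \eqref{tDm-vanishOnR}, the constant is $0$, completing the proof.

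As a consistency check I would compare against Kummer's classical weight-$4$ equation \cite[Eqn. (7.78)]{Lew1981}: converting it by negating every argument and discarding the lower-weight product terms (to which $\tD_4$ is insensitive) should reproduce $H(x,y) + H(y,x)$, giving independent confirmation that the coefficients and arguments assembled in $H$ are correctly transcribed.
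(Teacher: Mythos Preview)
Your proposal is correct and follows essentially the same route as the paper's proof: invoke Zagier's tensor criterion in one variable with the other frozen, use the $x\leftrightarrow y$ symmetry to obtain global constancy, and evaluate the constant at $x=y=\tfrac12$ via \eqref{tDm-vanishOnR}. Your added discussion of how to organize the symbol bookkeeping and the sanity check against Lewin's (7.78) are helpful elaborations, but the underlying argument is the same.
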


\emph{Step 5}. Specialization to the 12-th roots of unity. In the rest of this section, we put
\begin{equation*}
\rho:=e^{2\pi\mi/12}.
\end{equation*}
Note that by applying \eqref{D4-1} and \eqref{D4-2} at most twice, we can make the argument of $\tD_4$ lie in the upper half unit disk.
We will often apply this rule in our calculations below.

By Lemma \ref{lem-4-1}, we have
\begin{align}
\Ls_4\left(\frac{\pi}{3}\right)=\frac{1}{2}\pi\zeta(3)+\frac{9}{2}\tD_4(\rho^2).\label{eqn-Ls4-Pi3}
\end{align}
We remark that in \cite[Equation (83c)]{BS2013}, J.M. Borwein and A. Straub proved that $\Ls_4(\pi/3)=\pi\zeta(3)/2+9\Cl_{4}(\pi/3)/2$, (here $\Cl_{4}$ is the Clausen function) which agrees with \eqref{eqn-Ls4-Pi3} because $\tD_4(\rho^2)=\Im\Li_4(\rho^2)=\Cl_{4}(\pi/3)$.

By Lemma \ref{lem-4-2}, we have
\begin{align}\label{Lsc32-Pi3-1} \Lsc_{3,2}\left(\frac{\pi}{3}\right)=-\frac{1}{12}\pi\zeta(3)-\tD_4(\rho^2)+\frac{1}{2}\tD_4(\rho^4)
+\frac{5}{2}\tD_4\Big(\frac{\rho}{\sqrt{3}}\Big)-2\tD_4\Big(\frac{\rho^3}{\sqrt{3}}\Big).
\end{align}
By Lemma \ref{lem-D4}, we have that $\tD_4$ vanishes on $F(\rho^2,\rho^4)$, which implies
\begin{align}\label{F24}
-9\tD_4(\rho^2)+6\tD_4(\rho^4)-15\tD_4\Big(\frac{\rho}{\sqrt{3}}\Big)+12\tD_4\Big(\frac{\rho^3}{\sqrt{3}}\Big)=0.
\end{align}
Note the distribution relation
\[\tD_4(\rho^4) \,=\, 8\tD_4(\rho^2)+8\tD_4(-\rho^2) \,=\, 8\tD_{4}(\rho^2)-8\tD_4(\rho^4)\]
implies that
\begin{align}\label{D4-rho4}
\tD_{4}(\rho^{4})=\frac{8}{9}\tD_{4}(\rho^{2}).
\end{align}
Combining \eqref{Lsc32-Pi3-1}, \eqref{F24} and \eqref{D4-rho4}, we obtain
\begin{align}
\Lsc_{3,2}\left(\frac{\pi}{3}\right)=-\frac{1}{12}\pi\zeta(3)-\frac{7}{6}\tD_4(\rho^2).\label{eqn-Lsc32-Pi3}
\end{align}

\bigskip

Write $\rho^{1/2}:=e^{2\pi \mi/24}$ and
\[ r:=|1-\rho|=\frac{\sqrt{6}-\sqrt{2}}{2}. \]
By Lemma \ref{lem-4-1} and Lemma \ref{lem-4-2}, we have
\begin{align}
\Ls_{4}\Big(\frac{5\pi}{6}\Big) & {} =\frac{5}{4}\pi\zeta(3)-\frac{3}{2}\tD_4(\rho^5)+6\tD_4(r\rho^{1/2}),\label{eqn-Ls4-5Pi6}\\
\Lsc_{3,2}\Big(\frac{\pi}{6}\Big) &{} = -\frac{1}{24}\pi\zeta(3)-\tD_4(\rho)+\frac{1}{2}\tD_4(\rho^2)+\frac{1}{2}\tD_4(\rho^5)+2\tD_4(r\rho^{1/2})-2\tD_4(r^2\rho^3),\label{eqn-Lsc32-Pi6}\\
\Lsc_{3,2}\Big(\frac{5\pi}{6}\Big) & {} =-\frac{5}{24}\pi\zeta(3)+\frac{1}{2}\tD_4(\rho)-\frac{1}{2}\tD_4(\rho^2)-\tD_4(\rho^5)+2\tD_4(r\rho^{5/2})-2\tD_4(r^2\rho^3).\label{eqn-Lsc32-5Pi6}
\end{align}

By substituting first \eqref{eqn-4-1}, \eqref{eqn-4-3}, then \eqref{eqn-Ls4-Pi3}, \eqref{eqn-Lsc32-Pi3}, \eqref{eqn-Ls4-5Pi6}, \eqref{eqn-Lsc32-Pi6} and \eqref{eqn-Lsc32-5Pi6}, the left-hand side of \eqref{conj2} is transformed as follows:
\begin{align*}
&41\sum_{n=0}^\infty\frac{\binom{2n}{n}}{(2n+1)^{4}16^n}+9\sum_{n=1}^\infty\frac{\binom{2n}{n}H_{2n}}{(2n+1)^{3}16^n}\\[1ex]
& {} = \frac{41}{6}\Ls_4\Big(\frac{\pi}{3}\Big)+9\Lsc_{3,2}\Big(\frac{\pi}{3}\Big)+18\Ls_{4}\Big(\frac{5\pi}{6}\Big)-18\Lsc_{3,2}\Big(\frac{\pi}{6}\Big)+36\Lsc_{3,2}\Big(\frac{5\pi}{6}\Big)-18\pi\zeta(3)\\[1ex]
& {} = \frac{5}{12}\pi\zeta(3)+36\tD_4(\rho)-\frac{27}{4}\tD_4(\rho^2)-72\tD_4(\rho^5)+72\tD_4(r\rho^{1/2})+72\tD_4(r\rho^{5/2})-36\tD_4(r^2\rho^3).
\end{align*}
Since $\beta(4)=\Im(\Li_4(\mi))=\tD_4(\rho^3)$, Conjecture \eqref{conj2} is reduced to
\begin{align}
%\[
\tD_4(\rho)-\frac{3}{16}\tD_4(\rho^2)-\frac{10}{9}\tD_4(\rho^3)-2\tD_4(\rho^5)+2\tD_4(r\rho^{1/2})+2\tD_4(r\rho^{5/2})-\tD_4(r^2\rho^3) \stackrel{?}{=}0. \label{TBP-0}
%\]
\end{align}
By the distribution relation
\[\tD_4(r^2\rho^3)=8\tD_4(r\rho^{3/2})+8\tD_4(-r\rho^{3/2})=8\tD_4(r\rho^{3/2})-8\tD_4(r\rho^{9/2}),\]
it remains to show that
\begin{equation} \begin{aligned}
& 2\tD_4(r\rho^{1/2})-8\tD_4(r\rho^{3/2})+2\tD_4(r\rho^{5/2})+8\tD_4(r\rho^{9/2}) \\
& {} \stackrel{?}{=} -\tD_4(\rho)+
\frac{3}{16}\tD_4(\rho^2)+\frac{10}{9}\tD_4(\rho^3)+2\tD_4(\rho^5).
\end{aligned}
 \label{TBP-1}
 \end{equation}

By specializing Lemma \ref{lem-D4} to various choices of \( x, y \) we obtain further relations between \( \tD_4 \).  In particular, since $\tD_4$ vanishes on both \( \frac{1}{3}F(\rho^2,\rho)\) and \(\frac{1}{3}F(\rho^2,\rho^5) \), we have respectively
\begin{align}
5\tD_4(\rho)-3\tD_4(\rho^3)+3\tD_4(r\rho^{1/2})-\tD_4(r\rho^{3/2})-2\tD_4(r\rho^{5/2})+3\tD_4(r\rho^{9/2}) & {} = 0,\label{F21}\\
-3\tD_4(\rho^3)+5\tD_4(\rho^5)-2\tD_4(r\rho^{1/2})-3\tD_4(r\rho^{3/2})+3\tD_4(r\rho^{5/2})+\tD_4(r\rho^{9/2}) & {} =0.\label{F25}
\end{align}
By adding \eqref{F21} and \eqref{F25}, we have
\[\tD_4(r\rho^{1/2})-4\tD_4(r\rho^{3/2})+\tD_4(r\rho^{5/2})+4\tD_4(r\rho^{9/2})=-5\tD_4(\rho)+6\tD_4(\rho^3)-5\tD_4(\rho^5).\]
Therefore, \eqref{TBP-1} is reduced to
\begin{align}
-9\tD_4(\rho)-\frac{3}{16}\tD_4(\rho^2)+\frac{98}{9}\tD_4(\rho^3)-12\tD_4(\rho^5)\stackrel{?}{=}0.\label{TBP-2}
\end{align}
By the distribution relations, we have
\begin{align}
\tD_4(\rho^2)=8\tD_4(\rho)+8\tD_4(-\rho)& \, \quad \Longrightarrow \quad
 \tD_4(\rho^2)=8\tD_4(\rho)-8\tD_4(\rho^5),\label{rho2}\\
\tD_4(\rho^3)=27\tD_4(\rho)+27\tD_4(\rho^5)+27\tD_4(\rho^9)& \, \quad  \Longrightarrow\quad
 \tD_4(\rho^3)=\frac{27}{28}\tD_4(\rho)+\frac{27}{28}\tD_4(\rho^5).\label{rho3}
\end{align}
The equations \eqref{rho2} and \eqref{rho3} establish \eqref{TBP-2}. Therefore the proof of \eqref{conj2} is complete.

\section{Proof of \protect\eqref{conj1}}

Let
\[\phi=\frac{\sqrt{5}+1}{2}\]
be the golden ratio. Setting $p=2$ and $z=\frac{1}{4}$ in \eqref{eqn-lshch-1} and \eqref{eqn-lshch-2}, we have $\gth=\log\phi$ and
\begin{align}
&\sum_{n=0}^\infty \frac{\binom{2n}{n}}{(2n+1)^3(-16)^n}=-\frac{1}{2}\Lsh_3(2\log\phi),\label{eqn-5-1}\\
&\sum_{n=1}^\infty\frac{\binom{2n}{n}H_{2n}}{(2n+1)^2(-16)^n}=-2\Lshch_{2,2}(2\log\phi)+4\Lshch_{1,3}(\log\phi)+4\Lshch_{2,2}(\log\phi).\label{eqn-5-2}
\end{align}

\bigskip

\begin{lem}\label{lem-5-1}
The following expressions for $\Lsh_{3}(x)$, $\Lshch_{1,3}(x)$ and $\Lshch_{2,2}(x)$ hold for all $x \in (0,+\infty)$:
\begin{align}
\Lsh_3(x) ={} & -\tD_3\big(e^{-x}\big) -2\tD_3\big(1-e^{-x}\big) -\frac{1}{3}x\log^2\Big( 2\sinh \frac{x}{2} \Big) + \tD_3(1), \label{eqn-Lsh3}\\
\Lshch_{1,3}(x) ={} & -\tD_3\big({-}e^{-x}\big) -2\tD_3\Big( \frac{1}{1+e^{-x}} \Big) -\frac{1}{3}x\log^2\Big( 2\cosh \frac{x}{2} \Big) + \tD_3(1), \label{eqn-Lshch13}\\
 & \begin{aligned}[c]
\mathllap{ \Lshch_{2,2}(x) = {} }
 & -\frac{1}{8}\tD_3\big(e^{-2x}\big) -\frac{1}{2}\tD_3\big(1-e^{-2x}\big) +\tD_3\big(1-e^{-x}\big) +\tD_3\Big( \frac{1}{1+e^{-x}} \Big) \\
& -\frac{1}{3}x\log\Big( 2\sinh \frac{x}{2} \Big)\log\Big( 2\cosh \frac{x}{2} \Big) -\frac{3}{4}\tD_3(1). \end{aligned} \label{eqn-Lshch22}
\end{align}
\end{lem}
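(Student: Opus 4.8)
The plan is to prove all three identities by the same differentiate-and-match strategy used for Lemmas \ref{lem-4-1} and \ref{lem-4-2}. Each side of \eqref{eqn-Lsh3}, \eqref{eqn-Lshch13} and \eqref{eqn-Lshch22} is a real-analytic function of $x$ on $(0,+\infty)$, so it suffices to check that (a) the two sides have the same limit as $x\to 0^+$, and (b) their derivatives in $x$ agree. Since the left-hand sides are integrals, their $x$-derivatives are simply the integrands $-\log^2|2\sinh\tfrac x2|$, $-\log^2|2\cosh\tfrac x2|$ and $-\log|2\sinh\tfrac x2|\log|2\cosh\tfrac x2|$, so the whole problem reduces to elementary identities once the derivatives of the $\tD_3$-terms are made explicit.

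For the boundary behaviour (a), note that as $x\to 0^+$ we have $1-e^{-x},\,1-e^{-2x}\to 0$, so the terms $\tD_3(1-e^{-x})$ and $\tD_3(1-e^{-2x})$ vanish by \eqref{equ:tD=0at0}, while the ``$x\log^2$''-type correction terms vanish because $x\log^2 x\to 0$. The remaining constants are assembled from $\tD_3(1)=\zeta(3)$ (the correction term in \eqref{defn:tD} drops out since $\log^2|1|=0$), together with $\tD_3(-1)=-\tfrac34\zeta(3)$ and $\tD_3(\tfrac12)=\tfrac78\zeta(3)$, which follow from the special values $\Li_3(-1)=-\tfrac34\zeta(3)$ and $\Li_3(\tfrac12)=\tfrac78\zeta(3)-\tfrac1{12}\pi^2\log2+\tfrac16\log^3 2$ already recorded in the proof of Lemma \ref{lem-4-2}. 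A short check then gives $-\tD_3(-1)-2\tD_3(\tfrac12)+\tD_3(1)=0$ and $-\tfrac18\tD_3(1)+\tD_3(\tfrac12)-\tfrac34\tD_3(1)=0$, so each right-hand side tends to $0$, matching $\Lsh_3(0)=\Lshch_{1,3}(0)=\Lshch_{2,2}(0)=0$.

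The heart of the argument is (b). Here the crucial simplification is that every argument of $\tD_3$ occurring on the right-hand sides is real and $<1$, so by \eqref{tDm-vanishOnR} the weight-two single-valued dilogarithm $\tD_2$ vanishes identically on $\R$; consequently the $\Li_2$- and $\Li_3$-contributions to the derivative cancel and only elementary logarithmic terms survive. Concretely, from the explicit real form $\tD_3(z)=\Li_3(z)-\log|z|\,\Li_2(z)-\tfrac13\log^2|z|\log|1-z|$ one computes, for all real $z<1$ with $z\neq 0$,
\[ \frac{\dd}{\dd z}\tD_3(z)=\frac{\log|z|\,\log|1-z|}{3z}+\frac{\log^2|z|}{3(1-z)}. \]
Applying this together with the chain rule to the arguments $e^{-x}$, $-e^{-x}$, $1-e^{-x}$, $\tfrac1{1+e^{-x}}$, $e^{-2x}$ and $1-e^{-2x}$, and using the elementary rewritings $\log(1-e^{-x})=\log(2\sinh\tfrac x2)-\tfrac x2$ and $\log(1+e^{-x})=\log(2\cosh\tfrac x2)-\tfrac x2$ (so that $\tfrac{\dd}{\dd x}\log(2\sinh\tfrac x2)=\tfrac12\coth\tfrac x2$, etc.), each of \eqref{eqn-Lsh3}--\eqref{eqn-Lshch22} collapses to an identity purely among $\log(2\sinh\tfrac x2)$, $\log(2\cosh\tfrac x2)$ and $x$, which can be verified directly.

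I expect the main obstacle to be purely computational rather than conceptual: the bookkeeping for \eqref{eqn-Lshch22} is the heaviest, and one must check that the $\coth\tfrac x2$ and $\tanh\tfrac x2$ factors produced by differentiating the $x\log^2(\cdots)$ and $x\log(\cdots)\log(\cdots)$ correction terms precisely cancel the rational-in-$e^{-x}$ pieces $\tfrac{\log^2|z|}{3(1-z)}$ coming from the $\tD_3$-derivatives. Once these cancellations are organized---most cleanly by expressing everything through $\log(2\sinh\tfrac x2)$ and $\log(2\cosh\tfrac x2)$---the three claimed formulas follow.
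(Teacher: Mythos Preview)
Your proposal is correct and, for \eqref{eqn-Lsh3} and \eqref{eqn-Lshch13}, essentially identical to the paper's proof: both define the difference of the two sides, verify the limit at $x\to 0^+$ using $\tD_3(-1)=-\tfrac34\zeta(3)$ and $\tD_3(\tfrac12)=\tfrac78\zeta(3)$, and then differentiate.  Your derivative formula $\tfrac{\dd}{\dd z}\tD_3(z)=\tfrac{\log|z|\log|1-z|}{3z}+\tfrac{\log^2|z|}{3(1-z)}$ for real $z<1$ is a clean way to organize what the paper does more implicitly.

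The one genuine difference is in the treatment of \eqref{eqn-Lshch22}.  You propose to repeat the differentiate-and-match argument a third time, which works but is the heaviest of the three computations (six $\tD_3$-derivatives plus the mixed $x\log(2\sinh\tfrac x2)\log(2\cosh\tfrac x2)$ term).  The paper instead observes that $\log(2\sinh t)=\log(2\sinh\tfrac t2)+\log(2\cosh\tfrac t2)$, squares, and integrates to get the algebraic identity
\[
\Lshch_{2,2}(x)=\tfrac14\Lsh_3(2x)-\tfrac12\Lsh_3(x)-\tfrac12\Lshch_{1,3}(x),
\]
then substitutes \eqref{eqn-Lsh3} and \eqref{eqn-Lshch13} (together with the duplication $\tD_3(e^{-2x})=4\tD_3(e^{-x})+4\tD_3(-e^{-x})$) to obtain \eqref{eqn-Lshch22} with no further differentiation.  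Your uniform approach has the virtue of being mechanical and self-contained; the paper's shortcut trades that for a much lighter computation on the third identity.
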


\begin{re}
Using the shorthand $\psi(t) := \tD_3(1-t) - \tD_3(1-1/t)$ and adding suitable 3-term relations for $\tD_3$, Lemma \ref{lem-5-1} can be stated more succinctly and uniformly as follows.
\begin{align*}
 \Lshch_{j,k}(x) \; +\; & \frac{x}{3} \log^{j-1}(2\sinh(x/2))  \log^{k-1}(2\cosh(x/2))  \\
     &= \begin{cases}
     \psi(e^x)  \text{ for }(j,k)=(3,1), \\
     \frac{1}{4}\,( \psi(e^{2x}) - 2\psi(e^x) - 2\psi(e^{-x}) )   \text{ for } (j,k)=(2,2), \\
      \psi(-e^x)   \text{ for } (j,k)=(1,3).
     \end{cases}
 \end{align*}
\end{re}

\begin{proof} Suppose $x\in(0,+\infty)$.
Let $f_1(x)$ be the difference between the left-hand and right-hand sides of \eqref{eqn-Lsh3}. Clearly, $\lim_{x \rightarrow 0} f_1(x)=0$. By the definitions of $\tD_3$ and $\Lsh_3$, and the simple identity $\log(2\sinh(x/2)) = x/2 + \log(1-e^{-x})$, we may rewrite $f_1(x)$ as
\begin{align*}
f_1(x) =& -\int_{0}^{x} \Big( \frac{t}{2} + \log\big(1-e^{-t}\big) \Big)^2 \dd t +\Li_3\big(e^{-x}\big) +x\Li_2\big(e^{-x}\big) +2\Li_3\big(1-e^{-x}\big) \\
&-2\log\big(1-e^{-x}\big)\Li_2\big(1-e^{-x}\big) +x\log^2\big(1-e^{-x}\big) +\frac{1}{12}x^3 -\zeta(3).
\end{align*}
Then a straightforward computation gives $f_1^{\prime}(x) = 0$, which completes the proof of \eqref{eqn-Lsh3}. \medskip

Let $f_2(x)$ be the difference between the left-hand and right-hand sides of \eqref{eqn-Lshch13}. We have $\lim_{x \rightarrow 0} f_2(x) = \tD_3(-1) + 2\tD_3(1/2) -\tD_3(1) =0$ by the following identities:
\begin{align}\label{tD3-neg1-and-tD3-1over2}
\tD_3(-1) = -\frac{3}{4} \tD_3(1) \quad \text{and} \quad \tD_3\Big(\frac{1}{2}\Big) = \frac{7}{8} \tD_3(1).
\end{align}
(The first identity in \eqref{tD3-neg1-and-tD3-1over2} follows from the duplication relation $\tD_3(1)=4\tD_3(1)+4\tD_3(-1)$. See \cite[(6.12) and (1.16)]{Lew1981} for the second.)

By the definitions of $\tD_3$ and $\Lshch_{1,3}$, and the identity $\log\big(2\cosh(x/2)\big) = x/2 + \log\big(1+e^{-x}\big)$, we may rewrite $f_2(x)$ as
\begin{align*}
f_2(x) =& -\int_{0}^{x} \Big( \frac{t}{2} + \log\big(1+e^{-t}\big) \Big)^2 \dd t +\Li_3\big({-}e^{-x}\big) +x\Li_2\big({-}e^{-x}\big) + 2\Li_3\Big( \frac{1}{1+e^{-x}} \Big) \\
& +2\log\big(1+e^{-x}\big)\Li_2\Big( \frac{1}{1+e^{-x}} \Big) +\frac{2}{3}\log^3\big(1+e^{-x}\big) +x\log^2\big(1+e^{-x}\big)
+\frac{1}{12}x^3 -\zeta(3).
\end{align*}
Then a straightforward computation gives $f_2^{\prime}(x) = 0$, which completes the proof of \eqref{eqn-Lshch13}.

Observing that $\log(2\sinh x) = \log\big(2\sinh(x/2)\big) + \log\big(2\cosh(x/2)\big)$, we have
\begin{align*}
\Lsh_3(2x) ={} & {-}2\int_{0}^{x} \log^2(\sinh t) \dd t \\
={} & {-}2\int_{0}^{x} \Big( \log\Big(\sinh\frac{t}{2}\Big) + \log\Big(\cosh\frac{t}{2}\Big) \Big)^2 \dd t \\
={} & 2\Lsh_3(x) + 4\Lshch_{2,2}(x) + 2\Lshch_{1,3}(x).
\end{align*}
Therefore,
\begin{align}
\Lshch_{2,2}(x) = \frac{1}{4}\Lsh_3(2x) -\frac{1}{2}\Lsh_3(x) -\frac{1}{2}\Lshch_{1,3}(x). \label{Lshch22-to-other-Lshch}
\end{align}
\medskip

Equation \eqref{eqn-Lshch22} follows immediately by substituting \eqref{eqn-Lsh3} and \eqref{eqn-Lshch13} into \eqref{Lshch22-to-other-Lshch}, and using the duplication relation $\tD_3\big(e^{-2x}\big)=4\tD_3\big(e^{-x}\big)+4\tD_3\big({-}e^{-x}\big)$.
\end{proof}

\bigskip

Specializing Lemma \ref{lem-5-1} at $x=\log\phi$ and $x=2\log\phi$ (and simplifying the golden ratio combinations via \( 1-\phi^{-2} = \phi^{-1}, 1+\phi^{-2} = \sqrt{5} \phi^{-1}, 1 - \phi^{-4} = \sqrt{5} \phi^{-2} \)) we directly find
\begin{align*}
\Lsh_3(2\log\phi) = {} & {-}\tD_3\Big(\frac{1}{\phi^2}\Big) -2\tD_3\Big(\frac{1}{\phi}\Big) +\tD_3(1),\\
\Lshch_{2,2}(2\log\phi) = {} & {-}\frac{1}{8}\tD_3\Big(\frac{1}{\phi^4}\Big) -\frac{1}{2}\tD_3\Big(\frac{\sqrt{5}}{\phi^2}\Big) +\tD_3\Big(\frac{1}{\phi}\Big) +\tD_3\Big(\frac{\phi}{\sqrt{5}}\Big) -\frac{3}{4}\tD_3(1), \\
\Lshch_{1,3}(\log\phi) = {} & {-}\tD_3\Big({-}\frac{1}{\phi}\Big) -2\tD_3\Big(\frac{1}{\phi}\Big) -\frac{3}{4}\log^3\phi +\tD_3(1), \\
\Lshch_{2,2}(\log\phi) = {} & \frac{7}{8}\tD_3\Big(\frac{1}{\phi^2}\Big) +\frac{1}{2}\tD_3\Big(\frac{1}{\phi}\Big) +\frac{3}{4}\log^3\phi -\frac{3}{4}\tD_3(1).
\end{align*}
Then, by substituting the duplication relation
\[
\tD_3\Big({-}\frac{1}{\phi}\Big) = -\tD_3\Big(\frac{1}{\phi}\Big) +\frac{1}{4}\tD_3\Big(\frac{1}{\phi^2}\Big)
\]
and the following evaluation of $\tD_3(\phi^{-2})$ (see \cite[(6.13) and (1.20)]{Lew1981})
\[
\tD_3\Big(\frac{1}{\phi^2}\Big) = \frac{4}{5}\tD_3(1)
\]
into the above equations, we obtain
\begin{align}
\Lsh_3(2\log\phi) = {} & {-}2\tD_{3}\Big(\frac{1}{\phi}\Big) +\frac{1}{5}\tD_3(1), \label{eqn-Lsh3-2logphi-out} \\
\Lshch_{2,2}(2\log\phi) = {} & {-}\frac{1}{8}\tD_3\Big(\frac{1}{\phi^4}\Big) -\frac{1}{2}\tD_3\Big(\frac{\sqrt{5}}{\phi^2}\Big) +\tD_3\Big(\frac{1}{\phi}\Big) +\tD_3\Big(\frac{\phi}{\sqrt{5}}\Big) -\frac{3}{4}\tD_3(1), \label{eqn-Lshch22-2logphi-out} \\
\Lshch_{1,3}(\log\phi) = {}  & {-}\tD_3\Big(\frac{1}{\phi}\Big) -\frac{3}{4}\log^3\phi +\frac{4}{5}\tD_3(1), \label{eqn-Lshch13-logphi-out}\\
\Lshch_{2,2}(\log\phi) = {}  & \frac{1}{2}\tD_3\Big(\frac{1}{\phi}\Big) +\frac{3}{4}\log^3\phi -\frac{1}{20}\tD_3(1). \label{eqn-Lshch22-logphi-out}
\end{align}

\bigskip

By substituting first \eqref{eqn-5-1}, \eqref{eqn-5-2}, then \eqref{eqn-Lsh3-2logphi-out}--\eqref{eqn-Lshch22-logphi-out}, the left-hand side of \eqref{conj1} is transformed as follows:
\begin{align*}
& 17 \sum_{n=0}^\infty \frac{\binom{2n}{n}}{(2n+1)^3(-16)^n} + 5 \sum_{n=1}^\infty\frac{\binom{2n}{n}H_{2n}}{(2n+1)^2(-16)^n} \\
& {} =  {-}\frac{17}{2} \Lsh_3(2\log\phi) -10 \Lshch_{2,2}(2\log\phi) +20 \Lshch_{1,3}(\log\phi) +20 \Lshch_{2,2}(\log\phi) \\
& {} = \frac{5}{4} \tD_3\Big(\frac{1}{\phi^4}\Big) +5 \tD_3\Big(\frac{\sqrt{5}}{\phi^2}\Big) -3 \tD_{3}\Big(\frac{1}{\phi}\Big) -10 \tD_3\Big(\frac{\phi}{\sqrt{5}}\Big) +\frac{104}{5} \tD_3(1).
\end{align*}
Since $\tD_3(1) = \zeta(3)$ and the right-hand side of \eqref{conj1} is $14\zeta(3)$, Conjecture \eqref{conj1} is equivalent to
\begin{equation}\label{eqn:d3-id-step2}
	\frac{5}{4} \tD_3\Big(\frac{1}{\phi ^4}\Big)
	+5 \tD_3\Big(\frac{\sqrt{5}}{\phi ^2}\Big)
	-3 \tD_3\Big(\frac{1}{\phi }\Big)
	-10 \tD_3\Big(\frac{\phi }{\sqrt{5}}\Big)
	+\frac{34}{5} \tD_3(1) \overset{?}{=} 0.
\end{equation}
We shall prove this by specializing a suitable \( \tD_3 \) functional equation.

\begin{lem}\label{lem:d3fe}
	The following linear combination $G(x)$ vanishes identically under $\tD_3$, where {\small
	\begin{align*}
		G(x) & {} := \begin{aligned}[t]
		&
		5 \Big[\frac{1-2 x}{(1-x)^3 (1+x)}\Big]
		+6 \Big[{-}\frac{(1-x)^3}{(2-x)^3}\Big]
		-6 \Big[\frac{1}{(1-x)^3}\Big]
		-15 \Big[\frac{(1-x) (1+x)}{1-2 x}\Big]
		-15 \Big[\frac{1-2 x}{(1-x)^2}\Big]
			\\
		&
		-18 \Big[\frac{1-x}{(2-x)^2}\Big]
		+18 \Big[{-}\frac{1}{(1-x) (2-x)}\Big]
		-3 \Big[\frac{1}{(1-x) (1+x)}\Big]
		-10 \Big[\frac{1-2 x}{2-x}\Big]
		-10 \Big[\frac{2-x}{1+x}\Big]
		\\
		&
		+15 \Big[{-}\frac{x}{1-2 x}\Big]
		+15 \Big[\frac{x}{1-x}\Big]
		-24 \Big[{-}\frac{1-x}{1+x}\Big]
		+24 \Big[\frac{1-x}{1+x}\Big]
		+45 \Big[\frac{1-2 x}{1-x}\Big]
		-54 \Big[{-}\frac{1-x}{2-x}\Big]
		 \\
		 &
		 +36 \Big[\frac{1}{2-x}\Big]
		 +6 \Big[\frac{1}{1-x}\Big]
		-18 \Big[\frac{1}{1+x}\Big]
		+42 \Big[{-}\frac{1}{1-x}\Big]
		-34 \Big[ 1 \Big]\,.
		\end{aligned}
%		\\[1ex]
%		& \in \ker \tD_3 \,.
	\end{align*}
	}
	
	\begin{proof}
		The proof strategy is exactly the same as for Lemma \ref{lem-D4}; we apply the tensor criterion in \eqref{eqn:zag_prop1_tensor} in the case \( m = 3 \).  This shows that \( \tD_3(G(x))\) is constant.  To fix the constant, we specialize to \( x = 0 \).  We find (simplifying only with inversion at the moment) that
		\[
			\tD_3(G(0)) = \begin{aligned}[t]
			&
			18 \tD_3(-1)
			-36 \tD_3\Big({-}\frac{1}{2}\Big)
			+6 \tD_3\Big({-}\frac{1}{8}\Big)
			+30 \tD_3(0) \\
			& -18 \tD_3\Big(\frac{1}{4}\Big)
			+16 \tD_3\Big(\frac{1}{2}\Big)
			-11 \tD_3(1) \,.
			\end{aligned}
		\]
		This time, using the duplication relation \( \tD_3(\tfrac{1}{4}) = 4 \tD_3(\tfrac{1}{2}) + 4 \tD_3(-\tfrac{1}{2}) \) to eliminate \( \tD_3(\tfrac{1}{4}) \), and simplifying with \(  \tD_3(0) = 0 \), we obtain
		\[
			\tD_3(G(0))  =
			18 \tD_3(-1)
			-108 \tD_3\Big({-}\frac{1}{2}\Big)
			+6 \tD_3\Big({-}\frac{1}{8}\Big)
			-56 \tD_3\Big(\frac{1}{2}\Big)
			-11 \tD_3(1)
			\,.
		\]
		But we can show  this vanishes by using \eqref{tD3-neg1-and-tD3-1over2} and the well-known identity (see \cite[pg. 179]{Lew1981})
		\begin{align*}
			 \tD_3\Big({-}\frac{1}{8}\Big) - 18 \tD_3\Big({-}\frac{1}{2}\Big) = \frac{49}{4} \tD_3(1).
		\end{align*}
		With this the functional equation in the lemma is now proven.
	\end{proof}
\end{lem}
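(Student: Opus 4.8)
The plan is to follow verbatim the strategy already used for Lemma~\ref{lem-D4}: reduce the assertion $\tD_3\big(G(x)\big)=0$ to the purely combinatorial tensor criterion \eqref{eqn:zag_prop1_tensor} of \cite[Proposition 1]{Zag1990a} in the case $m=3$, which only shows that $\tD_3\big(G(x)\big)$ is \emph{constant} in $x$, and then pin down that constant by a single convenient specialization together with known special values of $\tD_3$.

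First I would treat $x$ as the single variable $t$ of \cite[Proposition 1]{Zag1990a} and, for each argument $x_i(x)$ occurring in $G(x)$, form the tensor $[x_i]\otimes\big([x_i]\wedge[1-x_i]\big)$ in $\operatorname{Sym}^{1}(\mathbb{C}(x)^\times)\otimes\wedge^2(\mathbb{C}(x)^\times)\otimes_{\mathbb Z}\Q$. Every argument of $G$ and every $1-x_i$ factors into powers of the linear irreducibles $x$, $1-x$, $1+x$, $2-x$ and $1-2x$, so after multiplicatively expanding each slot (and discarding signs and roots of unity as permitted) the entire weighted sum collapses to a finite $\Q$-linear combination of basis tensors $[\,u\,]\otimes\big([\,v\,]\wedge[\,w\,]\big)$ with $u,v,w$ among those five factors. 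The task is then to check that the coefficient of every such basis tensor vanishes; once this is done, \cite[Proposition 1]{Zag1990a} guarantees that $\tD_3\big(G(x)\big)$ is independent of $x$.

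Second, to evaluate the constant I would specialize at $x=0$, where all arguments become explicit rational numbers. Simplifying the resulting combination first by the inversion relation \eqref{D4-1}, then by the distribution (duplication) relation \eqref{tDm-distribution} with $N=2$ to eliminate $\tD_3(\tfrac14)$, one reaches a combination of the values $\tD_3(-1)$, $\tD_3(\pm\tfrac12)$, $\tD_3(-\tfrac18)$ and $\tD_3(1)$ only. Feeding in the evaluations $\tD_3(-1)=-\tfrac34\tD_3(1)$ and $\tD_3(\tfrac12)=\tfrac78\tD_3(1)$ from \eqref{tD3-neg1-and-tD3-1over2}, together with the classical Landen-type identity $\tD_3(-\tfrac18)-18\,\tD_3(-\tfrac12)=\tfrac{49}{4}\tD_3(1)$ of \cite[pg.~179]{Lew1981}, should force the constant to collapse to $0$, which completes the proof.

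The genuinely hard part here is not this verification but the \emph{construction} of $G(x)$ in the first place: one must engineer a one-parameter family of Kummer-type $\tD_3$ arguments whose specialization at a suitable golden-ratio value reproduces exactly the target combination \eqref{eqn:d3-id-step2}. Given the explicit $G(x)$ in the statement, the tensor check is routine (if tedious) linear algebra over $\Q$, and the only subtlety in fixing the constant is having the nonstandard value $\tD_3(-\tfrac18)$ at hand; I would take particular care that the torsion-discarding conventions of \eqref{eqn:zag_prop1_tensor} are applied consistently in all three slots, so that no genuine contribution is accidentally dropped.
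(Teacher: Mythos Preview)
Your proposal is correct and follows essentially the same approach as the paper: apply the tensor criterion \eqref{eqn:zag_prop1_tensor} for $m=3$ (over the factor basis $x,\,1-x,\,1+x,\,2-x,\,1-2x$) to show $\tD_3(G(x))$ is constant, then specialize at $x=0$ and reduce to zero using inversion, the duplication relation for $\tD_3(\tfrac14)$, the values in \eqref{tD3-neg1-and-tD3-1over2}, and the Lewin identity $\tD_3(-\tfrac18)-18\,\tD_3(-\tfrac12)=\tfrac{49}{4}\tD_3(1)$. Your added remark that the real difficulty lies in concocting $G(x)$ rather than in verifying it is also in line with the paper's own commentary.
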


Now consider \( \tD_3\big(G(-\phi^{-1})\big) \). We first put all real arguments into the interval \( [0,1] \)
by applying the duplication relation and inversion relation
\[
\tD_3(x^2) = 4 \big( \tD(x) + \tD(-x) \big) \,, \quad
\tD_3(x^{-1}) = \tD_3(x) \,.
\]
Then we obtain exactly
\[
0 = \tD_3\big(G(-\phi^{-1})\big) =
-\frac{25}{4} \tD_3\Big(\frac{1}{\phi ^4}\Big)
- 25 \tD_3\Big(\frac{\sqrt{5}}{\phi ^2}\Big)
+ 15 \tD_3\Big(\frac{1}{\phi }\Big)
+ 50 \tD_3\Big(\frac{\phi }{\sqrt{5}}\Big)
- 34 \tD_3(1) \,.
\]
This is \( {-}5 \) times the left-hand side of \eqref{eqn:d3-id-step2}, hence the left-hand side of \eqref{eqn:d3-id-step2} is equal to exactly $0$. The proof of \eqref{conj1} is complete.

\begin{re}
	It should be noted that the functional equation in Lemma \ref{lem:d3fe} has been concocted to give a simple proof of \eqref{eqn:d3-id-step2} in the previous lines.  This functional equation can be broken down into a number of smaller functional equations, with slightly more structured coefficients.  Specifically Lemma \ref{lem:d3fe} is a combination of the following 4 linearly independent functional equations (irreducible within the selected set of arguments),
	{\small
	\begin{align}
	& \begin{aligned}
	  \tD_3\Big( {} {-} & \Big[\frac{1-2 x}{(1-x)^3 (1+x)}\Big]
	  +3 \Big[\frac{1-2 x}{(1-x)^2}\Big]
	 +3 \Big[\frac{(1-x) (1+x)}{1-2 x}\Big]
	 +3 \Big[\frac{1}{(1-x) (1+x)}\Big]
	 \\
	 &
	 -6 \Big[\frac{1-2 x}{1-x}\Big]
	 +2 \Big[\frac{1-2 x}{2-x}\Big]
	 +2 \Big[\frac{2-x}{1+x}\Big]
	 +6 \Big[{-}\frac{1}{1-x}\Big]
	 -6 \Big[\frac{1}{1+x}\Big]
	 +5\Big[1\Big] \Big) = 0 \,, \end{aligned} \notag \\[1ex]
	 & \begin{aligned}[t]
	  \tD_3\Big(  {-} & \Big[{-}\frac{(1-x)^3}{(2-x)^3}\Big]
	 	 +\Big[\frac{1}{(1-x)^3}\Big]
	   +3 \Big[\frac{1-x}{(2-x)^2}\Big]
	   -3 \Big[{-}\frac{1}{(1-x) (2-x)}\Big]
	   	 +9 \Big[{-}\frac{1-x}{2-x}\Big]
	 \\
	 &	 -12 \Big[{-}\frac{1}{1-x}\Big]
	 -9 \Big[\frac{1}{1-x}\Big]
	  -6 \Big[\frac{1}{2-x}\Big]
	 +6 \Big[1\Big] \Big) = 0 \,, \end{aligned} \notag \\[1ex]
	& \tD_3\Big(2\Big[\frac{1}{(1-x) (1+x)}\Big]-4 \Big[{-}\frac{1-x}{1+x}\Big]+4 \Big[\frac{1-x}{1+x}\Big] -8 \Big[\frac{1}{1-x}\Big]-8 \Big[\frac{1}{1+x}\Big] + 7\Big[1\Big] \Big)  = 0  \,, \notag \\[1ex]
	& \tD_3\Big( \Big[\frac{x}{1-x}\Big] + \Big[\frac{1-2 x}{1-x}\Big]+\Big[{-}\frac{x}{1-2 x}\Big]-\Big[1\Big] \Big) = 0 \label{eqn:threeterm} \,.
	\end{align}
	}
	Each of these can be proven in exactly the same way as Lemma \ref{lem:d3fe} itself.  In fact, the last one \eqref{eqn:threeterm} is (up to inversion) a re-parameterization of the 3-term \cite[Equation (6.10)]{Lew1981} functional equation \( \tD_3(x) + \tD_3(1-x) + \tD_3(1-x^{-1}) = \tD_3(1) \), with \( x \mapsto \frac{x}{1-x} \).
\end{re}

\begin{re}
We originally discovered the proof of \eqref{conj1} by expressing \eqref{eqn-5-1} and \eqref{eqn-5-2} in terms of colored multiple zeta values by applying Au's mechanism developed in \cite{Au2020}. Then \eqref{conj1} follows from the computer-aided proof using Au's Mathematica package. For the detailed definition and introduction of colored multiple zeta values, see \cite[Chapter 13-14]{Zhao2016}.
\end{re}

\bigskip

\noindent{\bf Acknowledgments.}  Steven Charlton is supported by Deutsche Forschungsgemeinschaft Eigene Stelle grant CH 2561/1-1, for Projektnummer 442093436.  SC and Herbert Gangl would like to thank the Isaac Newton Institute for Mathematical Sciences, Cambridge, for support and hospitality during the programme \emph{$K$-theory, algebraic cycles and motivic homotopy theory} where work on this paper was undertaken. This work was supported by EPSRC grant no EP/K032208/1.   Ce Xu is supported by the National Natural Science Foundation of China (Grant No. 12101008), the Natural Science Foundation of Anhui Province (Grant No. 2108085QA01) and the University Natural Science Research Project of Anhui Province (Grant No. KJ2020A0057). Jianqiang Zhao is supported by the Jacobs Prize from The Bishop's School.

\end{document}